\crefname{hypothesis}{Hypothesis}{Hypotheses}
\newcommand{\opts}{\star}
\title{Geometric Matrix Midranges \thanks{\textbf{Funding:} This work benefited from funding by the European Research Council under the Advanced ERC Grant Agreement Switchlet n.670645. C. Mostajeran is supported by the Cambridge Philosophical Society.}}
\author{Cyrus Mostajeran\thanks{Department of Engineering, University of Cambridge, United Kingdom
  (\email{csm54@cam.ac.uk})}.
\and Christian Grussler\footnotemark[2] \textsuperscript{,}\thanks{Department of Electrical Engineering and Computer Sciences, UC Berkeley, USA.}
\and Rodolphe Sepulchre\footnotemark[2]}
\DeclareMathOperator{\diag}{diag}
\begin{document}

\maketitle

\begin{abstract}
We define geometric matrix midranges for positive definite Hermitian matrices and study the midrange problem from a number of perspectives. Special attention is given to the midrange of two positive definite matrices before considering the extension of the problem to more than two matrices. We compare matrix midrange statistics with the scalar and vector midrange problem and note the special significance of the matrix problem from a computational standpoint. We also study various aspects of geometric matrix midrange statistics from the viewpoint of linear algebra, differential geometry and convex optimization. A solution to the $N$-point problem is offered via convex optimization.
\end{abstract}

\begin{keywords}
 Positive definite matrices, Statistics, Optimization, Matrix means, Midranges, Thompson metric, Minimal geodesic, Affine-invariance
\end{keywords}

\begin{AMS}
 15B48, 53C22, 90C26
\end{AMS}

\section{Introduction}

The midrange of a collection of real numbers $a_1,\dots,a_N$ is defined as the arithmetic average of the extremal values. That is,
\begin{equation*}
x=\frac{1}{2}\left(\min_i a_i + \max_i a_i\right).
\end{equation*}
This is the unique solution to the optimization problem
\begin{equation*}
\min_{x\in\mathbb{R}} \; \max_i \; |x - a_i|.
\end{equation*}
In this paper, we are interested in midrange statistics in convex cones and in particular the cone of positive definite Hermitian matrices of a fixed dimension.\footnote{This manuscript further develops ideas partially introduced by the authors in \cite{GSI2019}.} The midrange of scalar-valued data is sensitive to outliers and is therefore a non-robust statistic. Despite this, it can be a useful measure in some contexts. For instance, the midrange is the maximally efficient estimator for the center of a uniform distribution. Thus, it can be an appropriate tool for data that is devoid of extreme outliers. It can also be useful in clustering algorithms that require the isolation of outlying clusters \cite{k-midranges,clustering2006,stigler2016}. It is an important notion in the statistics of extreme events \cite{Gumbel1967}.

Data representations based on symmetric positive definite matrices are common in a variety of applications from computer vision to machine learning. Often such matrices arise as covariance matrices that encode the correlations implicit in data and are thus highly structured \cite{Zhu2007}. Specific applications include brain-computer interface (BCI) systems \cite{Rao2013,Salem2018}, radar data processing \cite{Arnaudon2013}, and diffusion tensor imaging (DTI) \cite{Dryden2009}. It has been noted in many works that using nonlinear geometries related to generalized spectral properties of positive definite matrices yield significantly improved performance \cite{Pennec2006}.
Indeed, Euclidean techniques for statistics and analysis on covariance matrices often result in poor accuracy and undesirable effects, such as swelling phenomena in DTI \cite{Log-Euclidean2006}. It is in this context that much attention has been paid to developing geometric statistical methods on the cone of positive definite matrices \cite{Ando2004,Bhatia,Bhatia2006,Kubo1980,Moakher2005,Sra2012}.
A fundamental geometry that is associated to such spaces is the affine-invariant geometry \cite{Bhatia,Mostajeran2018}, whereby congruence transformations play the role of translations between matrices. The analogue of this geometry for scalars defined in the cone of positive real numbers $\mathbb{R}_+=\{x\in\mathbb{R}:x>0\}$ simply reduces to working with the logarithms of the data points and then mapping the result back to the positive cone $\mathbb{R}_+$ via the exponential map. Thus, we can define the affine-invariant midrange of $N$ positive numbers $y_i>0$ to be
\begin{equation} \label{scalar}
x=\exp\left(\frac{1}{2}\left[\min_i\log y_i+\max_i \log y_i\right]\right) = \left(\min_i y_i \cdot \max_i y_i\right)^{1/2}.
\end{equation}
Note that \cref{scalar} is the unique solution of the optimization problem
\begin{equation*}
\min_{x>0} \; \max_i \; |\log x - \log y_i| = \min_{x>0} \; \max_i \; \bigg|\log \frac{x}{y_i}\bigg|.
\end{equation*}

In the matrix setting, we define the geometric midrange problem on the cone of positive definite matrices as
\begin{align}  \label{matrix midrange}
\min_{X \succ 0} \; \max_i \; \|\log(Y_i^{-1/2}XY_i^{-1/2})\|_{\infty},
\end{align}
where $\{Y_1,\dots,Y_N\}$ are a collection of $N$ positive definite matrices of dimension $n$, $\|\cdot\|_{\infty}$ denotes the spectral operator norm on the space of Hermitian matrices of dimension $n$  defined by $\|A\|_{\infty}=\max\{|\lambda_1(A)|,\cdots,|\lambda_n(A)|\}$, and $X\succ 0$ denotes the positive definiteness of $X$.
Note that \cref{matrix midrange} can be interpreted as the smallest enclosing ball problem for the collection of data $\{Y_i\}$ in Thompson geometry. In particular, \cref{matrix midrange} can be expressed as $\min_{X \succ 0} \; \max_i \; d_{\infty}(X,Y_i)$, where $d_{\infty}$ denotes the Thompson metric (see \cref{sec:2 points}). The smallest enclosing ball problem of a finite set of points in Euclidean space was first posed by Sylvester in \cite{Sylvester1857} and is a fundamental problem in computational geometry. The problem is also known as the minimum enclosing ball, the 1-center, or the minimax optimization problem and has been studied by several authors \cite{Badoiu2008,Welzl1991}. It is an important problem that finds many applications in computer graphics and machine learning, including in collision detection, support vector clustering and similarity search \cite{Nielsen2009,Tsang2007}. On manifolds, the Riemannian smallest enclosing ball problem has been studied by Arnaudon and Nielsen in \cite{ArnaudonNielsen2013}. In particular, the authors consider 
\begin{equation} \label{R 1 center}
\min_{X \succ 0} \; \max_i \; \|\log(Y_i^{-1/2}XY_i^{-1/2})\|_{2},
\end{equation}
which is the corresponding problem with respect to the standard affine-invariant Riemannian geometry of positive definite matrices (see \cref{subsec: geodesics}). Note that  $\|\cdot\|_2$ in \cref{R 1 center} denotes the standard Frobenius norm. Although this problem is clearly closely related to \cref{matrix midrange}, there are fundamental differences between them. For instance, as proved by Afsari in \cite{Afsari2011}, there exists a unique point that minimizes the cost function in \cref{R 1 center}. In contrast, even in the case of two matrices $Y_1=A$ and $Y_2=B$, the solution to the optimization problem \cref{matrix midrange} is generally not unique. In \cref{subsec: opt}, we provide an interpretation of the Riemannian distance $d_2$ and Thompson distance $d_{\infty}$ as members of a family of affine-invariant Finsler distances.

One particular analytic solution for the geometric midrange of two positive definite matrices $A$ and $B$ that will receive special consideration is $A*B$ defined by
\begin{equation} \label{star}
A*B=\frac{1}{\sqrt{\lambda_{\min}}+\sqrt{\lambda_{\max}}}\left(B+\sqrt{\lambda_{\min}\lambda_{\max}} A\right),
\end{equation}
where $\lambda_{\max}$ and $\lambda_{\min}$ denote the maximum and minimum generalized eigenvalues of the pencil $(B,A)$, which are determined by the equation $\det(B-\lambda A)=0$. Note that $\lambda_{\max}$ and $\lambda_{\min}$ also coincide with the maximum and minimum eigenvalues of $BA^{-1}$, respectively. We will consider the properties of the expression in \cref{star} in some detail in \cref{sec:2 points}. For now, it is instructive to compare $A*B$ with the well-known geometric mean $A\#B$ of positive definite matrices $A$ and $B$ given by
\begin{equation} \label{geometric mean}
A\#B=A^{1/2}\left(A^{-1/2}BA^{-1/2}\right)^{1/2} A^{1/2}.
\end{equation}
The matrix geometric mean has been studied in great detail by several authors and is used in a variety of applications. Much research has been devoted to extending the notion of a geometric mean from two matrices to an arbitrary number of matrices and finding efficient algorithms for computing such a mean \cite{Bini2013,Iannazzo2011,Survey2012}. These include optimization-based approaches \cite{BhatiaHolbrook2006} as well as inductive sequential constructions \cite{Ando2004,Bini2010,LimInductive2014,Shuffled2017}. 

It is noteworthy that the formula \cref{star} for a matrix midrange of $A$ and $B$ is considerably less expensive to compute than the the geometric mean \cref{geometric mean}, particularly for high dimensional matrices. This is because $A*B$ mainly relies on the evaluation of extremal generalized eigenvalues that can be computed efficiently using a variety of techniques such as Krylov subspace methods \cite{Ge2016,Golub2000,Mishra2016,Stewart2002}. In contrast, the Cholesky-Schur algorithm for computing the geometric mean \cref{geometric mean} of two matrices has a complexity of $O(n^3)$ \cite{Iannazzo2011}.
Thus, we already see an important difference between the scalar and matrix midrange problems: in the scalar case, the mean and midrange of \emph{two} points are trivially the same, whereas a geometric midrange of two matrices may be much cheaper to compute than their geometric mean. The plots in \cref{fig plots} $(a)$ and $(b)$ 
illustrate how the computational cost of the midrange $A*B$ evolves with the matrix dimension as compared to the arithmetic and geometric means. The computations are based on a large number of randomly generated positive definite matrices of dimension $n=5$ to $n=10000$. \cref{fig plots} $(c)$ provides a similar comparison for the cost of computing the Thompson distance versus the Euclidean and affine-invariant Riemannian distances. The logarithms in  \cref{fig plots} $(b)$ and $(c)$ refer to the natural logarithm. 
All computations were performed on a 2017 Apple MacBook Pro laptop in MATLAB. The extremal generalized eigenvalues of $(B,A)$ that appear in $A*B$ were computed using the \texttt{eigs} function in MATLAB with the default settings, which utilizes algorithms outlined in \cite{1998arpack,Stewart2002}.

\begin{figure}
\centering
\includegraphics[width=1\linewidth]{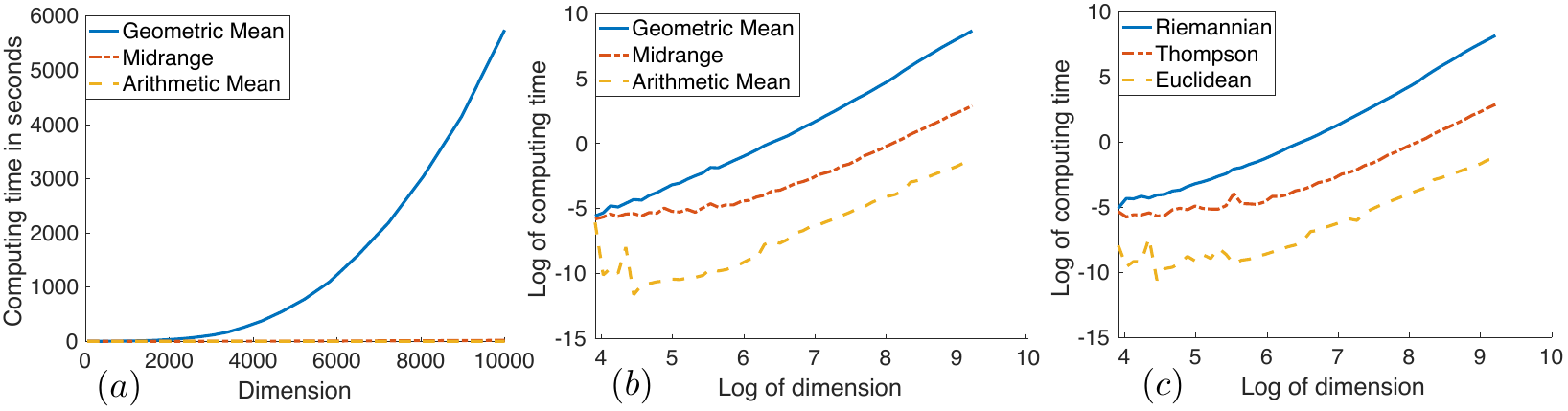}
  \caption{$(a)$, $(b):$ Computing time for the geometric mean $(A\#B)$, midrange $(A*B)$, and arithmetic mean $\left(\frac{A+B}{2}\right)$ of a pair of positive definite matrices. $(c):$ Evolution of the cost of computing Riemannian, Thompson, and Euclidean distances with matrix dimension.
   }
   \label{fig plots}
\end{figure}

\subsection{Paper organization and contributions}
The paper is organized as follows. In \cref{sec:2 points}, the midrange of two positive definite matrices is studied in detail from a variety of perspectives. We begin by proving a number of key properties of \cref{star} that are expected of a measure of central tendancy, including suitable order and monotonicity properties. In \cref{subsec: opt}, we present an interpretation of the geometric midrange within a unified optimization framework alongside the geometric mean and median. In \cref{subsec: order}, we present a characterization of the midrange formula \cref{star} based on an extremal ordering property defined using the L{\"o}wner order. In \cref{subsec: geodesics}, we review the differential geometry of the manifold of $n\times n$ positive definite Hermitian matrices and consider midranges arising as midpoints of geodesics. In \cref{sec:N point}, we define the geometric midrange problem for $N$ positive definite matrices and study its properties in some detail. We offer a solution to the problem via convex optimization in \cref{subsec: quasiconvex} before proving a number of optimality conditions and related results in \cref{subsec: optimality conditions}.

\section{Midrange of two positive definite matrices}
\label{sec:2 points}

Let $\mathbb{P}_n$ denote the set of $n\times n$ positive definite Hermitian matrices, which is the interior of the pointed, closed and convex cone of positive semidefinite matrices of the same dimensions. A pointed, closed and convex cone $C$ in a vector space $V$ induces a partial order on $V$ given by $x\leq y$ if and only if $y-x\in C$. The Thompson metric \cite{Lemmens2012,Thompson1963} on $C$ is defined to be
$d_{\infty}(x,y)=\log\max\{M(x/y;C),M(y/x;C)\}$,
where 
$M(y/x;C)=\inf\{\lambda\in\mathbb{R}:y\leq \lambda x\}$
for $x\in C\setminus\{0\}$ and $y\in V$. For $A,B\in\mathbb{P}_n$, we have $M(A/B)=\lambda_{\max}(AB^{-1})$, so that 
\begin{equation}
d_{\infty}(A,B)=\log\max\{\lambda_{\max}(AB^{-1}),\lambda_{\max}(BA^{-1})\}.
\end{equation}
Noting that $\lambda_i(A^{-1/2}BA^{-1/2})=\lambda_i(BA^{-1})$ and $\lambda_{\max}(\Sigma^{-1})=1/\lambda_{\min}(\Sigma )$ for any $\Sigma\in\mathbb{P}_n$, we find that the 2-point midrange problem \cref{matrix midrange} for data $A$ and $B$ takes the form
\begin{equation} \label{midrange Thompson}
\min_{X \succ 0} \; \max \; \{d_{\infty}(A,X),d_{\infty}(B,X)\}.
\end{equation}
A point $X$ is said to be a Thompson midpoint of the pair $(A,B)$ if $d_{\infty}(A,X)=d_{\infty}(B,X)=\frac{1}{2}d_{\infty}(A,B)$.
As $\left(\mathbb{P}_n,d_{\infty}\right)$ forms a complete metric space \cite{Lemmens2012}, the minimizers of \cref{midrange Thompson} coincide with the Thompson midpoints of $(A,B)$, which are generally non-unique. The geometry of the set of Thompson midpoints of a given pair of points $A,B\in\mathbb{P}_n$ is studied in detail in \cite{Lim2013}, where it is shown that the midpoint is unique if and only if the spectrum of $BA^{-1}$ lies in a set $\{\lambda,\lambda^{-1}\}$ for some $\lambda>0$. In this paper, we will pay special attention to the midrange $A*B$ given by \cref{star} due to its scalable computational properties.

Note that \cref{midrange Thompson} is equivalent to $\min_{X\succ 0}f(X)$, where $f(X)$ is given by
\begin{equation*}
\max\{\log\lambda_{\max}(XA^{-1}),\log\lambda_{\max}(XB^{-1}),-\log\lambda_{\min}(XA^{-1}),-\log\lambda_{\min}(XB^{-1})\}.
\end{equation*}
Using this expression and the following elementary lemma, it is easy to verify that $A*B$ is indeed a Thompson metric midpoint of $(A,B)$.
\begin{lemma} \label{shift}
If $c_1,c_2\in\mathbb{R}$ and $M$ is an $n\times n$ matrix with eigenvalues $\lambda_i(M)$, then $c_1M+c_2I$ has eigenvalues $c_1\lambda_i(M)+c_2$.
\end{lemma}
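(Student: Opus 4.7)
The plan is to prove the identity at the level of the characteristic polynomial, which automatically captures algebraic multiplicities rather than merely set-theoretic containment of spectra. I would begin by disposing of the trivial case $c_1 = 0$, in which $c_1 M + c_2 I = c_2 I$ has the single eigenvalue $c_2$ of multiplicity $n$, while the claimed eigenvalues $c_1 \lambda_i(M) + c_2$ are all equal to $c_2$ as well. Thus I may assume $c_1 \neq 0$ for the main step.

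For the main step, I would pull the scalar $c_1$ out of the characteristic determinant,
\begin{equation*}
\det\bigl(\mu I - (c_1 M + c_2 I)\bigr) = \det\bigl((\mu - c_2)I - c_1 M\bigr) = c_1^n \, p_M\!\left(\frac{\mu - c_2}{c_1}\right),
\end{equation*}
where $p_M(t) = \prod_{i=1}^n \bigl(t - \lambda_i(M)\bigr)$ denotes the characteristic polynomial of $M$. Substituting the factored form of $p_M$ immediately yields
\begin{equation*}
\det\bigl(\mu I - (c_1 M + c_2 I)\bigr) = \prod_{i=1}^n \bigl(\mu - c_1 \lambda_i(M) - c_2\bigr),
\end{equation*}
so the eigenvalues of $c_1 M + c_2 I$ are precisely the $c_1 \lambda_i(M) + c_2$, with algebraic multiplicities matching those of the $\lambda_i(M)$.

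A cleaner alternative that avoids splitting into cases is to invoke the Schur decomposition $M = U T U^*$ with $T$ upper triangular and $\mathrm{diag}(T) = (\lambda_1(M), \dots, \lambda_n(M))$; then $c_1 M + c_2 I = U(c_1 T + c_2 I) U^*$, in which $c_1 T + c_2 I$ is upper triangular with diagonal $\bigl(c_1 \lambda_i(M) + c_2\bigr)_i$, so the conclusion is immediate by unitary similarity. There is no serious obstacle here; the only minor point worth flagging is that a bare eigenvector argument would only show that each $c_1 \lambda_i(M) + c_2$ is \emph{an} eigenvalue, without tracking multiplicities, and both routes above sidestep that pitfall.
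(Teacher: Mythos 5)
The paper states this lemma without proof, treating it as an elementary fact, so there is no in-paper argument to compare against. Your proposal is correct and complete: the characteristic-polynomial computation is accurate (including the scalar extraction $\det((\mu-c_2)I - c_1 M) = c_1^n p_M((\mu-c_2)/c_1)$ and the $c_1=0$ degenerate case), and the Schur-decomposition alternative is the cleaner route since it applies uniformly without a case split and makes the multiplicity bookkeeping automatic via unitary similarity to an upper-triangular matrix. You are also right to flag that a naive eigenvector argument would fall short when $M$ is not diagonalizable; either of your two routes correctly tracks algebraic multiplicities, which is what the lemma implicitly requires since the $\lambda_i(M)$ are listed with repetition.
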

Specifically, we find that for $X=A*B$, we have
\begin{align*}
XA^{-1}&=\frac{1}{\sqrt{\lambda_{\min}}+\sqrt{\lambda_{\max}}}\left(BA^{-1}+\sqrt{\lambda_{\min}\lambda_{\max}}I\right), \\
XB^{-1} & =\frac{1}{\sqrt{\lambda_{\min}}+\sqrt{\lambda_{\max}}}\left(I+\sqrt{\lambda_{\min}\lambda_{\max}}AB^{-1}\right), 
\end{align*}
where $\lambda_{\max}$ and $\lambda_{\min}$ refer to the extremal eigenvalues of $BA^{-1}$. Using \cref{shift} and $\lambda_{\max}(BA^{-1})=1/\lambda_{\min}(AB^{-1})$, we find that $f(A*B)$ simplifies to $\frac{1}{2}d_{\infty}(A,B)$ as required.

We now consider the merits of the midrange $A*B$ as a measure of central tendency for $\{A,B\}$. The following are a number of properties that are desirable for such a mapping $\mu:\mathbb{P}_n\times\mathbb{P}_n\rightarrow\mathbb{P}_n$. We denote the conjugate transpose of $X$ by $X^*$ and the general linear group of $n\times n$ matrices by $GL(n)$.
\begin{enumerate}
\item Continuity: $\mu$ is a continuous map.
\item Symmetry: $\mu(A,B)=\mu(B,A)$ for all $A,B \in \mathbb{P}_n$.
\item Affine-invariance: $\mu(XAX^*,XBX^*)=X\mu(A,B)X^*$, for all $X\in GL(n)$.
\item Order property: $A \preceq  B \implies A\preceq  \mu(A,B) \preceq  B$.
\item Monotonicity: $\mu(A,B)$ is monotone in its arguments. 
\end{enumerate}
We will now prove that $\mu(A,B):=A*B$ indeed satisfies properties 1-3 listed above before turning our attention to the order and monotonicity properties 4 and 5, which merit special consideration. 

\begin{proposition}
The map $\mu(A,B)=A*B$ satisfies properties 1-3.
\end{proposition}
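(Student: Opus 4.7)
The plan is to verify each of the three properties by direct computation with the closed form \cref{star}, exploiting the fact that the matrix $BA^{-1}$ is well-conditioned whenever $A,B\in\mathbb{P}_n$ (all eigenvalues strictly positive and real).

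For continuity, I would note that the map $(A,B)\mapsto BA^{-1}$ is continuous on $\mathbb{P}_n\times\mathbb{P}_n$, and that the extremal eigenvalue functions $\lambda_{\min}$ and $\lambda_{\max}$ are continuous on the set of matrices with positive real spectrum (even though individual eigenvalue branches may split when multiplicities change). Because $\lambda_{\min}(BA^{-1})$ stays strictly positive, the square roots $\sqrt{\lambda_{\min}}$, $\sqrt{\lambda_{\max}}$ and the denominator $\sqrt{\lambda_{\min}}+\sqrt{\lambda_{\max}}$ are continuous and nonzero, so the whole expression $A*B$ is continuous in $(A,B)$.

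For symmetry, the key observation is that $\lambda_{\min}(AB^{-1})=1/\lambda_{\max}(BA^{-1})$ and $\lambda_{\max}(AB^{-1})=1/\lambda_{\min}(BA^{-1})$. Setting $\alpha=\sqrt{\lambda_{\min}(BA^{-1})}$ and $\beta=\sqrt{\lambda_{\max}(BA^{-1})}$, one finds
\begin{equation*}
A*B=\frac{1}{\alpha+\beta}\bigl(B+\alpha\beta\, A\bigr),\qquad B*A=\frac{\alpha\beta}{\alpha+\beta}\Bigl(A+\tfrac{1}{\alpha\beta} B\Bigr),
\end{equation*}
and the two right-hand sides are manifestly equal.

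For affine-invariance, write $A'=XAX^*$ and $B'=XBX^*$ with $X\in GL(n)$. Then $B'(A')^{-1}=XBX^* X^{-*}A^{-1}X^{-1}=X(BA^{-1})X^{-1}$, which is similar to $BA^{-1}$ and therefore has identical spectrum; in particular the scalars $\lambda_{\min},\lambda_{\max}$ appearing in the formula are unchanged. Factoring $X$ on the left and $X^*$ on the right out of $B'+\sqrt{\lambda_{\min}\lambda_{\max}}\,A'$ gives $A'*B'=X(A*B)X^*$, as required. None of the three verifications presents any real obstacle; the only point that requires mild care is the continuity statement, where one must invoke continuity of the extremal (rather than individual) eigenvalues to handle the possibility of coalescence in the spectrum of $BA^{-1}$.
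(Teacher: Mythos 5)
Your proof is correct and follows essentially the same route as the paper: all three properties are verified by direct computation from the closed form, using continuity of eigenvalues for (1), the reciprocal relation $\lambda_{\min}(AB^{-1})=1/\lambda_{\max}(BA^{-1})$ for (2), and invariance of the spectrum of $BA^{-1}$ under similarity for (3). The one small point of divergence is in the continuity argument: you appeal specifically to continuity of the \emph{extremal} eigenvalue functionals (which survive coalescence of eigenvalues), whereas the paper cites the general continuous dependence of the spectrum via the roots of the characteristic polynomial; both are valid, and since the formula \cref{star} uses only $\lambda_{\min}$ and $\lambda_{\max}$, your slightly sharper observation is apt but not essential.
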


\begin{proof}
1. The continuity of $\mu$ follows directly from the expression for $A*B$ in \cref{star}, the invertibility of $A$, and the continuous dependence of eigenvalues on matrix entries, which itself follows from consideration of the roots of the characteristic polynomial of a matrix. 2. For symmetry, we note that $\lambda_{\min}(AB^{-1})=1/\lambda_{\max}(BA^{-1})$ and  $\lambda_{\max}(AB^{-1})=1/\lambda_{\min}(BA^{-1})$, so that
\begin{align*}
B*A&=\frac{1}{\sqrt{1/\lambda_{\min}}+\sqrt{1/\lambda_{\max}}}\left(A+\frac{1}{\sqrt{\lambda_{\min}\lambda_{\max}}} B\right) \\
&= \frac{\sqrt{\lambda_{\min}\lambda_{\max}}}{\sqrt{\lambda_{\min}}+\sqrt{\lambda_{\max}}}\left(\frac{1}{\sqrt{\lambda_{\min}\lambda_{\max}}} B + A\right) = A*B.
\end{align*}
3. Affine-invariance follows immediately by noting that 
\begin{equation*}
\lambda_i\left(CBC^*(CAC^*)^{-1}\right)=\lambda_i\left(CBC^*(C^*)^{-1}A^{-1}C^{-1})\right)=\lambda_{i}(BA^{-1}).  
\end{equation*}
\end{proof}

The order property is a generalization of the property of means of positive numbers whereby a mean of a pair of points is expected to lie between the two points on the number line. For Hermitian matrices, a standard partial order $\preceq $ exists according to which $A\preceq  B$ if and only if $B-A$ is positive semidefinite. This partial order is known as the L{\"o}wner order and the monotonicity in condition 5 is also with reference to this order. Unlike the case of real positive numbers $a,b>0$, which always satisfy $a\leq b$ or $b\leq a$, two Hermitian matrices $A$ and $B$ may fail to satisfy both $A\preceq  B$ and $B\preceq A$. It is well-known that the L{\"o}wner order is affine-invariant in the sense that for all $A,B\in \mathbb{P}_n$, $X\in GL(n)$,
$A\preceq  B$ implies that $XAX^* \preceq  XBX^*$.
In particular, $A\preceq  B$ if and only if $I\preceq  A^{-1/2}BA^{-1/2}$. Thus, by affine-invariance of $\mu$, it suffices to prove point 4 in the case where $A=I$ since 
$
A\preceq \mu(A,B) \preceq B$ if and only if $I \preceq \mu(I,A^{-1/2}BA^{-1/2}) \preceq A^{-1/2}BA^{-1/2}$.
To establish the 4th property for $\mu(A,B)=A*B$, we make use of \cref{shift}.
Let $\Sigma\in\mathbb{P}_n$ be such that $I\preceq  \Sigma$ and note that this is equivalent to $\lambda_i(\Sigma)\geq 1$ for $i=1,\ldots,n$. Writing $\lambda_{\min}=\lambda_{\min}(\Sigma)$, $\lambda_{\max}=\lambda_{\max}(\Sigma)$, and $\lambda_i(\Sigma)=1+\delta_i$ for $\delta_i\geq 0$, we have by \cref{shift} that
\begin{align*}
\lambda_i(I*\Sigma)-1&=\lambda_i\left(\frac{1}{\sqrt{\lambda_{\min}}+\sqrt{\lambda_{\max}}}\left(\Sigma+\sqrt{\lambda_{\min}\lambda_{\max}} I\right)\right)-1 \\
&= \frac{\lambda_i(\Sigma)+\sqrt{\lambda_{\min}\lambda_{\max}}}{\sqrt{\lambda_{\min}}+\sqrt{\lambda_{\max}}}-1 \\
&= \frac{\delta_i +\left(\sqrt{\lambda_{\min}}-1\right)\left(\sqrt{\lambda_{\max}}-1\right)}{\sqrt{\lambda_{\min}}+\sqrt{\lambda_{\max}}} \geq 0,
\end{align*}
since $\lambda_{i}(\Sigma)\geq 1$ implies that $\sqrt{\lambda_i(\Sigma)}\geq 1$. Thus, we have shown that  $I\preceq \Sigma$ implies $I \preceq  I*\Sigma$. To prove the other inequality, let $\lambda_i(\Sigma)=\lambda_{\min}(\Sigma)+\epsilon_i$ for $\epsilon_i \geq 0$, and note that
\begin{align*}
\lambda_i(\Sigma-I*\Sigma)&=\lambda_i\left(\left(\frac{\sqrt{\lambda_{\min}}+\sqrt{\lambda_{\max}}-1}{\sqrt{\lambda_{\min}}+\sqrt{\lambda_{\max}}}\right)\Sigma-\frac{\sqrt{\lambda_{\min}\lambda_{\max}}}{\sqrt{\lambda_{\min}}+\sqrt{\lambda_{\max}}} \; I \right) \\
&= \left(\frac{\sqrt{\lambda_{\min}}+\sqrt{\lambda_{\max}}-1}{\sqrt{\lambda_{\min}}+\sqrt{\lambda_{\max}}}\right)\lambda_i(\Sigma)-\frac{\sqrt{\lambda_{\min}\lambda_{\max}}}{\sqrt{\lambda_{\min}}+\sqrt{\lambda_{\max}}} \\ 
 &= \sqrt{\lambda_{\min}}\left(\sqrt{\lambda_{\min}}-1\right)+\left(\frac{\sqrt{\lambda_{\min}}+\sqrt{\lambda_{\max}}-1}{\sqrt{\lambda_{\min}}+\sqrt{\lambda_{\max}}}\right)\epsilon_i \geq 0,
\end{align*}
as $I\preceq  \Sigma$ ensures that $\sqrt{\lambda_{\min}}\geq 1$. Therefore, we have also shown that $\Sigma-I*\Sigma \succeq  0$. That is, 
\begin{equation} \label{ordering fit}
I\preceq  \Sigma \implies I \preceq  I*\Sigma \preceq  \Sigma,
\end{equation}
for all $\Sigma\in\mathbb{P}_n$. In particular, upon substituting $\Sigma=A^{-1/2}BA^{-1/2}$ in \cref{ordering fit} and using the affine-invariance properties of both the L{\"o}wner order and the mean $\mu(A,B)=A*B$, we establish the following important property.
 
\begin{proposition}
 For $A,B\in\mathbb{P}_n$, $A\preceq B$ implies that $A\preceq  A*B \preceq  B$.
\end{proposition}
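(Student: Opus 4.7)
The plan is to exploit the affine-invariance of both the Löwner order and the midrange map $\mu(A,B) = A*B$ to reduce the general case to the special case $A = I$, which has already been handled by \cref{ordering fit}. All the analytic content, namely the eigenvalue manipulations via \cref{shift}, lives in that preceding calculation, so what remains is purely structural bookkeeping.

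Concretely, I would set $\Sigma := A^{-1/2} B A^{-1/2} \in \mathbb{P}_n$. Applying affine-invariance of the Löwner order with the Hermitian congruence $X \mapsto A^{-1/2} X A^{-1/2}$ to the hypothesis $A \preceq B$ yields $I \preceq \Sigma$, placing $\Sigma$ in the regime to which \cref{ordering fit} applies. Invoking that inequality then gives $I \preceq I*\Sigma \preceq \Sigma$.

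Next I would reverse the congruence via $X \mapsto A^{1/2} X A^{1/2}$, and using affine-invariance of the Löwner order once more produce $A \preceq A^{1/2}(I*\Sigma) A^{1/2} \preceq A^{1/2} \Sigma A^{1/2} = B$. Finally, applying the affine-invariance property of $\mu$ with $X = A^{1/2}$ (which is Hermitian, so $X^* = X$) identifies the middle term as $A^{1/2}(I*\Sigma) A^{1/2} = \mu(A^{1/2} I A^{1/2}, A^{1/2} \Sigma A^{1/2}) = \mu(A, B) = A * B$, closing the chain of inequalities.

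I expect no genuine obstacle: the argument is a direct transport of the already-established base case via affine-invariance, with no further eigenvalue computation required. The only subtlety is matching up the two invariance properties consistently, which is immediate once one observes that $A^{1/2}$ is Hermitian.
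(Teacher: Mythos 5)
Your proposal is correct and follows the same route as the paper: reduce to the base case $A=I$ via affine-invariance of both the Löwner order and $\mu$, invoke \cref{ordering fit} (which contains the eigenvalue computations via \cref{shift}), and transport back by the inverse congruence $X \mapsto A^{1/2}XA^{1/2}$. The paper states this reduction somewhat tersely before and after \cref{ordering fit}, and your write-up spells out the same bookkeeping explicitly.
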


We now consider the 5th and final desirable property of $\mu:\mathbb{P}_n\times\mathbb{P}_n\rightarrow\mathbb{P}_n$, which is monotonicity of $\mu$ in its arguments. First recall that a map $F:\mathbb{P}_n\rightarrow\mathbb{P}_n$ is said to be monotone if $\Sigma_1\preceq \Sigma_2$ implies that $F(\Sigma_1)\preceq F(\Sigma_2)$. By symmetry and affine-invariance, it is sufficient to consider monotonicity of $\mu(I,\Sigma)$ with respect to $\Sigma$. That is, monotonicity is established by showing that 
\begin{equation*}
\Sigma_1\preceq \Sigma_2 \implies I*\Sigma_1\preceq  \ I*\Sigma_2.
\end{equation*}
However, it turns out that $F(\Sigma):=I*\Sigma$ is not monotone with respect to $\Sigma$ as we demonstrate below. Nonetheless, $F$ is seen to enjoy certain weaker monotonicity properties. Considering the eigenvalues of $I * \Sigma$, we find that
\begin{align} \label{monotonicity eigenvalues}
\lambda_i(I*\Sigma)=\frac{\lambda_i(\Sigma)+\sqrt{\lambda_{\min}\lambda_{\max}}}{\sqrt{\lambda_{\min}}+\sqrt{\lambda_{\max}}},
\end{align}
where $\lambda_{\min}$ and $\lambda_{\max}$ refer to the smallest and largest eigenvalues of $\Sigma$. 
\begin{proposition} \label{min max monotone}
The maximum and minimum eigenvalues of $F(\Sigma)=I*\Sigma$ are monotone with respect to $\Sigma$.
\end{proposition}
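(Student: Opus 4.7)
The plan is to exploit the explicit formula \cref{monotonicity eigenvalues} for the eigenvalues of $F(\Sigma) = I*\Sigma$ and observe that the extremal cases admit a clean algebraic simplification. Setting $\lambda_i(\Sigma) = \lambda_{\max}$ in the numerator, the factorization
\[ \lambda_{\max} + \sqrt{\lambda_{\min}\lambda_{\max}} = \sqrt{\lambda_{\max}}\left(\sqrt{\lambda_{\max}} + \sqrt{\lambda_{\min}}\right) \]
cancels against the denominator $\sqrt{\lambda_{\min}} + \sqrt{\lambda_{\max}}$, giving $\lambda_{\max}(I*\Sigma) = \sqrt{\lambda_{\max}(\Sigma)}$. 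The analogous factorization with $\sqrt{\lambda_{\min}}$ extracted yields $\lambda_{\min}(I*\Sigma) = \sqrt{\lambda_{\min}(\Sigma)}$. Before concluding, I would briefly verify that these really are the extremal eigenvalues of $F(\Sigma)$, which is immediate since the map $\lambda \mapsto (\lambda + \sqrt{\lambda_{\min}\lambda_{\max}})/(\sqrt{\lambda_{\min}}+\sqrt{\lambda_{\max}})$ is strictly increasing in $\lambda$ (the denominator is a positive constant independent of $\lambda$).

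With the identities $\lambda_{\max}(F(\Sigma)) = \sqrt{\lambda_{\max}(\Sigma)}$ and $\lambda_{\min}(F(\Sigma)) = \sqrt{\lambda_{\min}(\Sigma)}$ in hand, the proposition reduces to two standard facts. First, $\Sigma_1 \preceq \Sigma_2$ implies $\lambda_{\max}(\Sigma_1) \leq \lambda_{\max}(\Sigma_2)$ and $\lambda_{\min}(\Sigma_1) \leq \lambda_{\min}(\Sigma_2)$ by the Courant-Fischer variational characterization, since the hypothesis gives $x^*\Sigma_1 x \leq x^*\Sigma_2 x$ for every $x \in \mathbb{C}^n$. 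Second, $t \mapsto \sqrt{t}$ is monotone increasing on $\mathbb{R}_+$. Composing the two yields both monotonicity claims simultaneously.

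There is no genuine obstacle here: the substance of the proof is really the algebraic simplification, after which one appeals only to textbook facts about the L\"owner order and the square root. The resulting identities are worth highlighting, however, because they neatly mirror the scalar affine-invariant midrange formula \cref{scalar} specialized to $y_{\min}=1$, namely $\sqrt{y_{\max}}$. This also foreshadows why full monotonicity of $F$ in the L\"owner order fails despite monotonicity of the extremal eigenvalues, since the intermediate eigenvalues of $I*\Sigma$ involve $\lambda_i(\Sigma)$ additively combined with the \emph{global} quantity $\sqrt{\lambda_{\min}\lambda_{\max}}$, which can change discontinuously under perturbations preserving $\Sigma_1 \preceq \Sigma_2$.
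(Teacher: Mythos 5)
Your proposal is correct and follows essentially the same route as the paper: derive the identities $\lambda_{\min}(I*\Sigma)=\sqrt{\lambda_{\min}(\Sigma)}$ and $\lambda_{\max}(I*\Sigma)=\sqrt{\lambda_{\max}(\Sigma)}$ from \cref{monotonicity eigenvalues} at $i=1,n$, then observe that these are monotone in $\Sigma$. You simply spell out the algebraic cancellation and the appeal to Weyl monotonicity plus monotonicity of $\sqrt{\cdot}$ that the paper leaves implicit.
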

\begin{proof}
Considering the cases $i=1$ and $i=n$, we find that
(\ref{monotonicity eigenvalues}) yields
\begin{equation*}
\lambda_{\min}(I*\Sigma) = \sqrt{\lambda_{\min}(\Sigma)} \quad \mathrm{and} \quad \lambda_{\max}(I*\Sigma) = \sqrt{\lambda_{\max}(\Sigma)},
\end{equation*}
both of which are seen to be monotone functions of $\Sigma$. 
\end{proof}
It is in the sense of the above that $\mu(A,B)=A*B$ inherits a weak monotonicity property. The monotonic dependence of the extremal eigenvalues of $I*\Sigma$ on $\Sigma$ ensures that if $\Sigma_1\preceq \Sigma_2$, then we can at least rule out the possibility that $I*\Sigma_1 \succ I*\Sigma_2$, where $\succ 0$ here denotes positive definiteness. To prove that monotonicity is generally not satisfied in the full sense, consider a diagonal matrix $\Sigma=\operatorname{diag}(a,b,x)\in\mathbb{P}(3)$, where $\lambda_{\min}(\Sigma)=a < b \leq x = \lambda_{\max}(\Sigma)$ and $x$ is thought of as a variable. We have $I*\Sigma = \operatorname{diag}\left(\sqrt{a},f(x),\sqrt{x}\right)$,
where 
\begin{equation*}
\lambda_2(I*\Sigma)=f(x):=\frac{b+\sqrt{ax}}{\sqrt{a}+\sqrt{x}}.
\end{equation*}
Taking the derivative of $f$ with respect to $x$, we find that 
\begin{equation*}
f'(x)=\frac{a-b}{2\sqrt{x}(\sqrt{a}+\sqrt{x})^2} < 0, \quad \forall x \geq b,
\end{equation*}
which shows that the second eigenvalue of $I*\Sigma$ decreases as $x$ increases. Thus, we see that $I*\Sigma$ cannot depend monotonically on $\Sigma$ in this example. 

As a summary, we collect the main results so far in the following theorem.

\begin{theorem}
The midrange $\mu(A,B)=A*B$ defined in \cref{star} yields a Thompson metric midpoint of $A,B\in\mathbb{P}_n$ that is continuous, symmetric and affine-invariant. Moreover, if $A\preceq  B$, then $A\preceq \mu(A,B) \preceq  B$, and the extremal eigenvalues of $\mu(I,\Sigma)$ depend monotonically on $\Sigma\in\mathbb{P}_n$.
\end{theorem}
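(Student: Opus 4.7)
The plan is to recognize this theorem as a consolidation of results already established in \cref{sec:2 points}, so the proof will amount to assembling the earlier propositions together with one explicit computation. I will organize it in the order of the statement: first the Thompson midpoint property, then the continuity/symmetry/affine-invariance package, then the L\"owner order property, and finally the extremal-eigenvalue monotonicity.

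For the midpoint assertion I will reproduce the computation following \cref{shift}: rewrite \cref{midrange Thompson} as $\min_{X \succ 0} f(X)$, where
\[
f(X)=\max\{\log\lambda_{\max}(XA^{-1}),\log\lambda_{\max}(XB^{-1}),-\log\lambda_{\min}(XA^{-1}),-\log\lambda_{\min}(XB^{-1})\},
\]
substitute $X=A*B$ into the given expressions for $XA^{-1}$ and $XB^{-1}$, and apply \cref{shift} together with the reciprocity $\lambda_{\max}(BA^{-1})=1/\lambda_{\min}(AB^{-1})$ to obtain $f(A*B)=\tfrac{1}{2}d_{\infty}(A,B)$. To promote this from \emph{minimizer of $f$} to \emph{equidistant Thompson midpoint}, I will then observe that $f(X)=\max\{d_{\infty}(A,X),d_{\infty}(B,X)\}$ and invoke the triangle inequality $d_{\infty}(A,B)\le d_{\infty}(A,X)+d_{\infty}(X,B)\le 2f(X)$; the equality $f(A*B)=\tfrac{1}{2}d_{\infty}(A,B)$ then forces both $d_{\infty}(A,A*B)$ and $d_{\infty}(B,A*B)$ to equal $\tfrac{1}{2}d_{\infty}(A,B)$.

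The remaining three claims are direct restatements of earlier propositions. Continuity, symmetry and affine-invariance come from the proposition verifying properties 1--3 for $\mu(A,B)=A*B$; the L\"owner order property is the subsequent proposition, which was reduced via affine-invariance of $\preceq$ to the two eigenvalue inequalities collected in \cref{ordering fit}; and the monotonic dependence of the extremal eigenvalues of $\mu(I,\Sigma)$ on $\Sigma$ is \cref{min max monotone}, where \cref{monotonicity eigenvalues} at $i=1,n$ simplifies to $\sqrt{\lambda_{\min}(\Sigma)}$ and $\sqrt{\lambda_{\max}(\Sigma)}$. Since every ingredient is already in place, I do not expect any real obstacle; the only point requiring a brief extra line beyond citation is the triangle-inequality argument above, which is needed because the minimizer of \cref{midrange Thompson} is in general non-unique (as recorded just before \cref{midrange Thompson}) and so ``minimizer of $f$'' does not \emph{a priori} mean ``equidistant midpoint.''
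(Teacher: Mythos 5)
Your proposal is correct and matches the paper's intent: the theorem is explicitly presented as a summary (``As a summary, we collect the main results so far in the following theorem'') with no separate proof, so assembling the earlier propositions and the computation following \cref{shift} is exactly the paper's route. Your additional triangle-inequality step, promoting $f(A*B)=\tfrac12 d_\infty(A,B)$ to the equidistance $d_\infty(A,A*B)=d_\infty(B,A*B)=\tfrac12 d_\infty(A,B)$, is a sound and slightly more careful reading than the paper's terse ``as required'' (though the explicit eigenvalue computation already yields both distances directly, so it is not strictly needed).
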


We also note that $A*B$ satisfies a key scaling property which suggests that it may be a plausible candidate for a computationally scalable substitute for the standard geometric mean $A\#B$ of two positive definite matrices.

\begin{proposition}
For any real scalars $a,b>0$ and matrices $A,B\in\mathbb{P}_n$, we have 
\begin{equation} \label{geo scaling prop}
(aA)*(bB)=\sqrt{ab}(A*B).
\end{equation}
\end{proposition}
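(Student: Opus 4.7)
The plan is to prove this by direct substitution into the definition \cref{star}, exploiting how the generalized eigenvalues of the pencil $(bB, aA)$ rescale.

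First I would observe that since $(bB)(aA)^{-1} = (b/a)BA^{-1}$, the extremal eigenvalues satisfy
\begin{equation*}
\lambda_{\min}\bigl((bB)(aA)^{-1}\bigr) = \tfrac{b}{a}\lambda_{\min}, \qquad \lambda_{\max}\bigl((bB)(aA)^{-1}\bigr) = \tfrac{b}{a}\lambda_{\max},
\end{equation*}
where $\lambda_{\min}$ and $\lambda_{\max}$ denote the extremal eigenvalues of $BA^{-1}$. This is the only nontrivial ingredient, and it is immediate since scaling a matrix by a positive scalar scales all its eigenvalues by the same factor.

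Next I would plug these rescaled eigenvalues into the formula \cref{star}. The denominator becomes $\sqrt{b/a}\,(\sqrt{\lambda_{\min}}+\sqrt{\lambda_{\max}})$, while inside the parentheses the term $\sqrt{\lambda_{\min}\lambda_{\max}}$ gets replaced by $(b/a)\sqrt{\lambda_{\min}\lambda_{\max}}$, multiplying $aA$ to give $b\sqrt{\lambda_{\min}\lambda_{\max}}\,A$. Pulling $b$ out of the numerator (since $bB$ also has a factor of $b$) and combining with $1/\sqrt{b/a} = \sqrt{a/b}$ in the denominator produces the prefactor $b\sqrt{a/b} = \sqrt{ab}$, after which what remains is precisely $A*B$.

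I do not anticipate any real obstacle: the claim is essentially a homogeneity property that follows from inspection of the formula, and the only thing to check is the bookkeeping of square roots. No matrix inequalities or spectral subtleties are needed beyond the trivial scaling of eigenvalues.
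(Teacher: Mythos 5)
Your proof is correct and follows exactly the paper's approach: substitute $\lambda_i\bigl((bB)(aA)^{-1}\bigr) = \tfrac{b}{a}\lambda_i(BA^{-1})$ into the formula \cref{star} and simplify the resulting square roots. The paper states this substitution in one line; you have simply spelled out the bookkeeping explicitly, which is fine.
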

\begin{proof}
The result follows upon substituting $\lambda_i\left((bB)(aA)^{-1}\right)=\frac{b}{a}\lambda_i(BA^{-1})$ into the formula \cref{star}.
\end{proof}

\begin{remark}
The scaling in \cref{geo scaling prop} of course does not generally hold for a mean of two matrices. Indeed, it does not generally hold for means arising as $d_{\infty}$-midpoints either. For instance, \cite{Lim2013} identifies 
\begin{equation} \label{diamond}
A\diamond B = 
\begin{dcases}
\frac{\sqrt{\lambda_{\max}}}{1+\lambda_{\max}}(A+B) \quad \mathrm{if} \quad \lambda_{\min}\lambda_{\max} \geq 1 \\
\frac{\sqrt{\lambda_{\min}}}{1+\lambda_{\min}}(A+B) \quad \mathrm{if} \quad \lambda_{\min}\lambda_{\max} \leq 1
\end{dcases}
\end{equation}
as another $d_{\infty}$-midpoint of $A$ and $B$. Clearly $A\diamond B$ does not scale geometrically in the sense of \cref{geo scaling prop}.
\end{remark}

\subsection{An optimization-based formulation}
\label{subsec: opt}

A norm $\|\cdot\|$ on the space of $n\times n$ complex matrices is said to be unitarily invariant if $\|UXV\|=\|X\|$ for all $n\times n$ matrices $X$ and unitary matrices $U,V$. A norm $\Phi$ on $\mathbb{R}^n$ is called a symmetric gauge norm if it is invariant under permutations and sign changes of coordinates. Consider the family of affine-invariant metric distances $d_{\Phi}$ on $\mathbb{P}_n$ defined as 
\begin{equation} \label{d Phi}
d_{\Phi}(A,B)=\|\log A^{-1/2}BA^{-1/2} \|_{\Phi},
\end{equation}
where $\|\cdot\|_{\Phi}$ is any unitarily invariant norm on the space of Hermitian matrices of dimension $n$ defined by
$
\|X\|_{\Phi}:=\Phi(\lambda_1(X),\ldots,\lambda_n(X)),
$
with $\lambda_{\min}(X)=\lambda_n(X)\leq \ldots \leq \lambda_1(X) = \lambda_{\max}(X)$ denoting the $n$ real eigenvalues of $X$ and
$\Phi$ a symmetric gauge norm on $\mathbb{R}^n$ \cite{Bhatia2003}. The norms $\|\cdot\|_{\Phi}$ induced by the $l_p$-norms on $\mathbb{R}^n$ for $1\leq p \leq \infty$ are called the Schatten $p$-norms. For the choice of $\Phi(x_1,\ldots,x_n)=(\sum_i x_i^2)^{1/2}$, $d_2:=d_{\Phi}$ corresponds to the metric distance generated by the standard affine-invariant Riemannian metric on $\mathbb{P}_n$ given by $\langle X,Y \rangle_{\Sigma}=\operatorname{tr}(\Sigma^{-1}X\Sigma^{-1}Y)$ for $\Sigma\in\mathbb{P}_n$ and Hermitian matrices $X,Y\in T_{\Sigma}\mathbb{P}_n$. The length element $ds$ of this geometry satisfies
$ds^2 = \operatorname{tr}\left(\Sigma^{-1}d\Sigma\right)^2$.
The unique (up to parametrization) Riemannian geodesic from $A$ to $B$ is given by the curve $\gamma_{\mathcal{G}}:[0,1]\rightarrow\mathbb{P}_n$ defined by 
\begin{equation} \label{R geodesic}
\gamma_{\mathcal{G}}(t)=A^{1/2}\left(A^{-1/2}BA^{-1/2}\right)^t A^{1/2}.
\end{equation}
This curve is significant as a minimal geodesic for \emph{any} of the affine-invariant metrics $d_{\Phi}$ \cite{Bhatia2003}. The midpoint of $\gamma_{\mathcal{G}}$ is the matrix geometric mean $A\#B$ \cref{geometric mean}, which is a metric midpoint in the sense that $d_{\Phi}(A,A\#B)=d_{\Phi}(A\#B,B)=\frac{1}{2}d_{\Phi}(A,B)$ for any choice of symmetric gauge $\Phi$. 
With $\Phi(x_1,\ldots,x_n)=\max_i|x_i|$, $d_{\Phi}=d_{\infty}$ yields the distance function that coincides with the Thompson metric \cite{Thompson1963} on the cone $\mathbb{P}_n$
\begin{equation*} 
d_{\infty}(A,B)=\|\log A^{-1/2}BA^{-1/2}\|_{\infty}=\max\{\log\lambda_{\max}(BA^{-1}),\,\log\lambda_{\max}(AB^{-1})\}.
\end{equation*}
Therefore, we see that the geometric mean $A\#B$ is also a geometric midrange of $A$ and $B$.

The invariant Finsler metrics \cref{d Phi} provide a route to geometrically generalize several measures of aggregation of data to the space of positive definite matrices $\mathbb{P}_n$. Specifically, the mean, median, and midrange of a collection of real numbers $a_1,\dots,a_N$ can be defined as 
\begin{align*}
&\operatorname{argmin}_{x\in\mathbb{R}}\left(\sum_i (x-a_i)^2\right)^{1/2}=\operatorname{argmin}_{x\in\mathbb{R}}\|x\boldsymbol{1}-\boldsymbol{a}\|_2, \\
&\operatorname{argmin}_{x\in\mathbb{R}}\sum_i |x-a_i| = \operatorname{argmin}_{x\in\mathbb{R}}\|x\boldsymbol{1}-\boldsymbol{a}\|_1, \\
&\operatorname{argmin}_{x\in\mathbb{R}} \; \max_i \; |x - a_i| =\operatorname{argmin}_{x\in\mathbb{R}}\|x\boldsymbol{1}-\boldsymbol{a}\|_{\infty},
\end{align*}
respectively, where $\boldsymbol{1}=(1,\dots,1)\in\mathbb{R}^N$ and $\boldsymbol{a}=(a_1,\dots,a_N)$. By analogy, one can extend these notions to geometric averages for a collection of data $Y_1,\dots,Y_N\in\mathbb{P}_n$ arising as 
\begin{align} \label{general opt}
\operatorname{argmin}_{X \succ 0}\Phi_N\left(d_{\Phi_n}(X,Y_i))\right),
\end{align}
where $\Phi_n$ denotes the gauge norm on the space of $n\times n$ Hermitian matrices and $\Phi_N$ denotes the corresponding gauge function acting on the $N$ distances $d_{\Phi_n}(X,Y_i)$. If $\Phi$ corresponds to the $l_2$ vector norm, \cref{general opt} yields the \emph{geometric mean} $\mathcal{G}_2$ of $Y_1,\dots,Y_N$, also known as the Karcher mean \cite{Bhatia,BhatiaHolbrook2006,Moakher2005}:
\begin{equation} \label{Karcher}
\mathcal{G}_2(Y_1,\dots,Y_N)=\operatorname{argmin}_{X \succ 0} \sum_{i=1}^Nd_{2}(X,Y_i)^2.
\end{equation}
If $N=2$, the unique solution $\mathcal{G}_2(A,B)$ of  \cref{Karcher} coincides with the geometric mean $A\#B$. One can also define \emph{geometric medians} of $Y_1,\dots,Y_N$ to be solutions to \cref{general opt} for the choice of $\Phi(x_1,\dots,x_n)=\sum_i|x_i|$:
\begin{equation}
\mathcal{G}_1(Y_1,\dots,Y_N)=\operatorname{argmin}_{X \succ 0} \sum_{i=1}^Nd_{1}(X,Y_i).
\end{equation}
Note that the $d_1$ distance of $X\in \mathbb{P}_n$ to the identity $I$ takes the form
\begin{equation} \label{d1 logdet}
d_1(X,I)=\|\log X\|_1 = \sum_{i=1}^N|\log \lambda_i (X)| = \operatorname{tr}\left((\log X\log X)^{1/2}\right).
\end{equation}
It is interesting to compare \cref{d1 logdet} to the function $F(X)=\log \det(X)$, which plays an important role in convex optimization \cite{Boyd2004}. In particular, we have
\begin{equation*}
F(X)=\log \det(X)=\operatorname{tr}(\log X)=\sum_{i=1}^N\log \lambda_i (X) .
\end{equation*}
If $X \succeq I$, then $\log X \succeq 0$ and hence $d_1(X,I)=\operatorname{tr}(\log X)= \log \det X$.

If $\Phi$ corresponds to the $l_{\infty}$-norm, then \cref{general opt} yields the geometric midrange problem \cref{matrix midrange}. In the $N=2$ case, we have already seen that $A*B$ is a solution to the corresponding midrange optimization problem $\operatorname{min}_{X \succ 0} \max\{d_{\infty}(X,A),d_{\infty}(X,B)\}$.
The $N$-point problem is studied in more detail in \cref{sec:N point}.

\subsection{Extremal ordering property}
\label{subsec: order}

Here we describe a characterization of the midrange $A*B$ of $A,B\in\mathbb{P}_n$ that does not rely on any additional structures on $\mathbb{P}_n$ except for the standard L{\"o}wner partial order $\succeq$. It is remarkable that such a characterization that is independent of any metric or differential geometric structure on $\mathbb{P}_n$ exists.

\begin{theorem}
Let $A,B\in\mathbb{P}_n$. Then,
\begin{align*}
A*B &= \max_{X\in\operatorname{span}\{A,B\}}\bigg\{X=X^*: 
\begin{pmatrix}
A & X \\
X & B
\end{pmatrix} \succeq 0
\bigg\} \\
&=  \max_{a,b\in\mathbb{R}}\bigg\{aA+bB: 
\begin{pmatrix}
A & aA+bB \\
aA+bB & B
\end{pmatrix} \succeq 0
\bigg\}.
\end{align*}
\end{theorem}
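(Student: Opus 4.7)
The first equality is immediate, since every Hermitian element of $\operatorname{span}\{A,B\}$ can be written as $aA+bB$ with $a,b\in\mathbb{R}$. The plan for the second equality is to exploit affine-invariance to normalize $A$ to the identity, and then use the Schur complement to convert the block-PSD constraint into a one-dimensional interpolation problem for $\sqrt{\cdot}$.

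Concretely, I would first conjugate the block matrix by $\operatorname{diag}(A^{-1/2}, A^{-1/2})$, which is a congruence and therefore PSD-preserving. This transforms the constraint to
\[
\begin{pmatrix} I & aI + b\Sigma \\ aI + b\Sigma & \Sigma \end{pmatrix} \succeq 0,
\]
where $\Sigma := A^{-1/2}BA^{-1/2}$ is similar to $BA^{-1}$ and hence has the same extremal eigenvalues $\lambda_{\min}, \lambda_{\max}$ as in \cref{star}. Applying the Schur complement with respect to the $I$ block yields the equivalent constraint $(aI+b\Sigma)^2 \preceq \Sigma$. Since $aI+b\Sigma$ commutes with $\Sigma$, I would then diagonalize in a common eigenbasis and reduce the entire matrix inequality to the scalar inequalities $|a+b\mu_i| \leq \sqrt{\mu_i}$, where $\mu_1,\dots,\mu_n$ are the eigenvalues of $\Sigma$.

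Feasibility of $A*B$ is the easier half: the parameters $(a^*,b^*)$ defining $I*\Sigma = a^*I + b^*\Sigma$ are exactly those for which the affine function $\mu \mapsto a^* + b^*\mu$ is the chord of $\sqrt{\mu}$ on $[\lambda_{\min}, \lambda_{\max}]$, so concavity of the square root gives $a^* + b^*\mu_i \leq \sqrt{\mu_i}$ for every $i$. For maximality, given any other feasible $(a,b)$, the constraints at the two extreme eigenvalues give $a+b\lambda_{\min} \leq \sqrt{\lambda_{\min}} = a^* + b^*\lambda_{\min}$ and $a+b\lambda_{\max} \leq \sqrt{\lambda_{\max}} = a^* + b^*\lambda_{\max}$. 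Writing each intermediate $\mu_i$ as a convex combination of $\lambda_{\min}$ and $\lambda_{\max}$ and using that $\mu \mapsto a+b\mu$ is affine then gives $a+b\mu_i \leq a^*+b^*\mu_i$ for all $i$. In the common diagonal basis this says $aI + b\Sigma \preceq I*\Sigma$, and conjugating back by $A^{1/2}$ yields the desired $aA + bB \preceq A*B$.

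The main conceptual subtlety, and the reason to expect a clean answer at all, is that the restriction to the two-dimensional affine family $aA+bB$ is precisely what forces all competitors to commute with $\Sigma$ after the $A$-normalization. Without this structural simplification the matrix supremum would not even be attained; with it, the problem collapses to the elementary geometric fact that the chord of a concave function is the largest affine function that does not exceed it at the two endpoints.
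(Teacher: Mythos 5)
Your proposal is correct and follows essentially the same route as the paper: reduce the block-PSD constraint via Schur complement, simultaneously diagonalize because $X$ lies in $\operatorname{span}\{A,B\}$, and recognize the resulting scalar constraints as an affine under-estimation problem for $\sqrt{\cdot}$ solved by its chord. The only cosmetic difference is that you normalize $A$ to the identity whereas the paper normalizes $B$, which is an immaterial choice.

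There is, however, one place where your argument is actually tighter than the paper's. The paper's maximality step says that imposing equality in the scalar constraints at any pair of indices other than the extremal ones ``would result in the violation of some of the other inequalities,'' but this only rules out a finite list of competitors and does not directly show that the chosen $A*B$ L{\"o}wner-dominates \emph{every} feasible $aA+bB$. Your argument closes this gap cleanly: from feasibility at the two endpoints you get $a+b\lambda_{\min} \leq a^*+b^*\lambda_{\min}$ and $a+b\lambda_{\max} \leq a^*+b^*\lambda_{\max}$, and since both sides are affine in $\mu$ the inequality propagates to every eigenvalue $\mu_i\in[\lambda_{\min},\lambda_{\max}]$ by taking convex combinations; conjugating back gives $aA+bB\preceq A*B$ outright. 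This is the honest proof that the claimed ``max'' is actually attained in the L{\"o}wner order. One small remark for completeness: the Schur complement gives the two-sided constraint $|a+b\mu_i|\leq\sqrt{\mu_i}$, and your feasibility check should briefly note that $a^*,b^*>0$ makes the lower bound automatic; you do use only the upper bound in the maximality argument, which is all that is needed there.
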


\begin{proof}
For any $X=X^*$, we have the congruence relation
\begin{align*}
\begin{pmatrix}
A & X \\
X & B
\end{pmatrix}  &\sim 
\begin{pmatrix}
I & -XB^{-1} \\
0 & I
\end{pmatrix}  
\begin{pmatrix}
A & X \\
X & B
\end{pmatrix}
\begin{pmatrix}
I & 0 \\
-B^{-1}X & I
\end{pmatrix} \\
 &=
 \begin{pmatrix}
A-XB^{-1}X & 0 \\
0 & B
\end{pmatrix}.  
\end{align*}
This matrix is clearly positive semidefinite if and only if $A \succeq XB^{-1}X$, which is equivalent to
\begin{equation} \label{order1}
B^{-1/2}AB^{-1/2} \succeq B^{-1/2}XB^{-1}XB^{-1/2} = (B^{-1/2}XB^{-1/2})^2.
\end{equation}
Using the monotonicity of the square root function $\Sigma \mapsto \Sigma^{1/2}$ on $\mathbb{P}_n$ and the affine-invariance of the L{\"o}wner order, \cref{order1} holds if and only if
\begin{equation} \label{order2}
X \preceq B^{1/2}(B^{-1/2}AB^{-1/2})^{1/2}B^{1/2}.
\end{equation}
The expression on the right hand side is of course the geometric mean $A\#B$ and thus we have
\begin{equation*}
A\#B = \max_{X\succeq 0}\bigg\{X=X^*: 
\begin{pmatrix}
A & X \\
X & B
\end{pmatrix} \succeq 0
\bigg\}.
\end{equation*}
If we restrict $X$ to be in the real span of $A$ and $B$, \cref{order2} becomes
\begin{equation*}
aA + bB \preceq B^{1/2}(B^{-1/2}AB^{-1/2})^{1/2}B^{1/2},
\end{equation*}
which is equivalent to $a\Sigma + b I \preceq \Sigma^{1/2}$
for $\Sigma = B^{-1/2}AB^{-1/2}$. By diagonalizing $\Sigma^{1/2}$ we obtain a unitary matrix $V$ such that $\Sigma=V^*DV$ and $\Sigma^{1/2}=V^*D^{1/2}V$, where $D=\diag(\lambda_1(\Sigma),\dots,\lambda_n(\Sigma))$. Therefore, we have 
$aD + bI \preceq D^{1/2}$,
which is equivalent to
\begin{equation} \label{order3}
a\lambda_i(\Sigma) + b \leq \sqrt{\lambda_i(\Sigma)},
\end{equation}
for $i=1,\dots,n$. If we require that equality hold in \cref{order3} for $i=1$ and $i=n$, so that
\begin{equation*}
a\lambda_{\min}(\Sigma)+b = \sqrt{\lambda_{\min}(\Sigma)} \quad \mathrm{and} \quad 
a\lambda_{\max}(\Sigma)+b = \sqrt{\lambda_{\max}(\Sigma)} 
\end{equation*}
we find that 
\begin{align*}
a &= \frac{\sqrt{\lambda_{\max}(\Sigma)}-\sqrt{\lambda_{\min}(\Sigma)}}{\lambda_{\max}(\Sigma)-\lambda_{\min}(\Sigma)} = \frac{1}{\sqrt{\lambda_{\max}(\Sigma)}+\sqrt{\lambda_{\min}(\Sigma)}} \\
b &= \frac{\sqrt{\lambda_{\min}(\Sigma)\lambda_{\max}(\Sigma)}}{\sqrt{\lambda_{\max}(\Sigma)}+\sqrt{\lambda_{\min}(\Sigma)}}.
\end{align*}
For this choice of $a$ and $b$, we have $aA + bB = A*B$. Moreover, \cref{order3} is satisfied for each $i=1,\dots,n$ since
\begin{align*}
\sqrt{\lambda_i(\Sigma)}-a\lambda_i(\Sigma)-b &= \frac{\sqrt{\lambda_{\min}\lambda_i}+\sqrt{\lambda_{\max}\lambda_i}-\lambda_i-\sqrt{\lambda_{\min}\lambda_{\max}}}{\sqrt{\lambda_{\min}}+\sqrt{\lambda_{\max}}} \\
& = \frac{(\sqrt{\lambda_{\max}}-\sqrt{\lambda_i})(\sqrt{\lambda_i}-\sqrt{\lambda_{\min}})}{\sqrt{\lambda_{\max}}+\sqrt{\lambda_{\min}}} \geq 0.
\end{align*} 
Imposing equality in \cref{order3} for any pair of indices other than $i=1$ and $i=n$ would yield coefficients $a$ and $b$ that result in the violation of some of the other inequalities in \cref{order3}. Therefore, our choice of $a$ and $b$ is indeed optimal.
\end{proof}

\subsection{Differential geometric viewpoint}
\label{subsec: geodesics}

The set $\mathbb{P}_n$ is a smooth manifold whose tangent space $T_{\Sigma}\mathbb{P}_n$ at any point $\Sigma\in\mathbb{P}_n$ can be identified with the set of $n\times n$ Hermitian matrices $\mathbb{H}_n$. The matrix exponential map $X\mapsto e^X$ maps $\mathbb{H}_n$ bijectively onto $\mathbb{P}_n$. Its differential $de^X:\mathbb{H}_n\rightarrow \mathbb{H}_n$ at $X$ is the linear map given by 
$de^{X}(Z)=\frac{d}{dt}(e^{X+tZ})\vert_{t=0}$.
The following exponential metric increasing property is established by Bhatia in \cite{Bhatia2003}. See \cite{Lang1999} for an earlier version of the theorem.
\begin{theorem} \label{exp increasing}
For any symmetric gauge norm $\Phi$ and Hermitian matrices $X$ and $Z$, we have
\begin{equation*}
\|Z\|_{\Phi} \leq \|e^{-X/2}de^{X}(Z)e^{-X/2}\|_{\Phi},
\end{equation*}
where $\|\cdot\|_{\Phi}$ denotes the unitarily invariant norm induced by $\Phi$.
\end{theorem}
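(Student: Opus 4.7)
The plan is to reduce the exponential metric increasing inequality to a Schur (Hadamard) multiplier estimate. First, I would diagonalize $X = U \Lambda U^*$ with $\Lambda = \diag(\lambda_1, \ldots, \lambda_n)$ and set $\tilde Z = U^* Z U$. Invoking the Daleckii-Krein formula for the derivative of the matrix exponential and then simplifying the conjugation by $e^{-X/2}$, one obtains in the eigenbasis of $X$ the identity
\[
e^{-X/2}\, de^X(Z)\, e^{-X/2} = U\,(K \circ \tilde Z)\,U^*, \qquad K_{ij} = \frac{\sinh((\lambda_i - \lambda_j)/2)}{(\lambda_i - \lambda_j)/2},
\]
where $\circ$ denotes the Schur (entrywise) product and $K_{ii} = 1$ is understood by continuity. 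Because every unitarily invariant norm $\|\cdot\|_\Phi$ is invariant under conjugation by the unitary matrix $U$, the theorem reduces to establishing $\|\tilde Z\|_\Phi \leq \|K \circ \tilde Z\|_\Phi$.

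The next step is to Schur-invert $K$: set $M_{ij} := 1/K_{ij}$ so that $\tilde Z = M \circ (K \circ \tilde Z)$. The desired inequality then follows from the standard Schur multiplier contraction
\[
\|M \circ A\|_\Phi \leq \max_i M_{ii} \cdot \|A\|_\Phi,
\]
which is valid for every positive semidefinite $M$ and every unitarily invariant norm (see, e.g., Bhatia's \emph{Matrix Analysis}), provided I can verify that $M$ is positive semidefinite with unit diagonal. The unit diagonal $M_{ii} = 1$ is automatic from $\lim_{x\to 0} (x/2)/\sinh(x/2) = 1$, and applying the contraction to $A = K \circ \tilde Z$ yields $\|\tilde Z\|_\Phi = \|M \circ (K \circ \tilde Z)\|_\Phi \leq \|K \circ \tilde Z\|_\Phi$.

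The main obstacle is therefore to show that, for every choice of real numbers $\lambda_1, \ldots, \lambda_n$, the matrix $M$ with entries $M_{ij} = f(\lambda_i - \lambda_j)$ is positive semidefinite, where $f(x) := (x/2)/\sinh(x/2)$. Equivalently, $f$ should be a positive-definite function on $\mathbb{R}$ in the sense of Bochner. I would handle this via the Weierstrass product
\[
f(x) = \prod_{k=1}^{\infty}\frac{(2k\pi)^2}{(2k\pi)^2 + x^2},
\]
combined with two classical facts: each factor $a^2/(a^2+x^2)$ is a positive-definite function because its Fourier transform $\pi a\, e^{-a|\omega|}$ is nonnegative, and pointwise products of positive-definite functions remain positive-definite by the Schur product theorem for kernels. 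Taking the limit of the finite partial products inside the closed cone of positive semidefinite matrices then yields $M \succeq 0$ and completes the proof.
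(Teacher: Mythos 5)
Your proof is correct, and it matches the approach of the source the paper cites (the paper does not prove this theorem itself but defers to Bhatia \cite{Bhatia2003} and Lang \cite{Lang1999}). The chain you describe --- Daleckii--Krein formula for $de^X$, simplification of the conjugation by $e^{-X/2}$ to the Schur kernel $K_{ij}=\sinh\!\left(\tfrac{\lambda_i-\lambda_j}{2}\right)\big/\tfrac{\lambda_i-\lambda_j}{2}$, reduction to the standard contraction bound $\|M\circ A\|_\Phi\leq \max_i M_{ii}\,\|A\|_\Phi$ for positive semidefinite $M$, and verification that $M_{ij}=f(\lambda_i-\lambda_j)$ with $f(x)=(x/2)/\sinh(x/2)$ is a positive semidefinite kernel --- is precisely the structure of Bhatia's argument. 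Your use of the Weierstrass product $f(x)=\prod_{k\geq 1}(2k\pi)^2/((2k\pi)^2+x^2)$ together with the positive definiteness of each Cauchy factor, the Schur product theorem, and closedness of the PSD cone is a clean and valid way to establish the key positivity, equivalent in substance to the Fourier/integral-representation argument usually given.
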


This theorem has several important consequences, which we will briefly review. First note that the distance functions $d_{\Phi}$ defined in \cref{d Phi} are induced by the affine-invariant Finsler structures on $\mathbb{P}_n$ given by
\begin{equation} \label{Finsler structure}
\|d\Sigma\|_{\Sigma,\Phi} := \|\Sigma^{-1/2}d\Sigma\Sigma^{-1/2}\|_{\Phi},
\end{equation}
for $\Sigma\in\mathbb{P}_n$ and $d\Sigma\in T_{\Sigma}\mathbb{P}_n$. For our purposes, we can think of a Finsler structure on $\mathbb{P}_n$ as a smoothly varying norm on the tangent bundle of $\mathbb{P}_n$. Such a structure can be used to calculate the length of any smooth curve $\gamma$ in $\mathbb{P}_n$. We can express any such curve as the image of a curve $\Gamma$ in $\mathbb{H}_n$ under the exponential map. In particular, any smooth curve $\gamma:[0,1]\rightarrow\mathbb{P}_n$ from $I$ to $\Sigma$ can be expressed as $\gamma(t)=e^{\Gamma(t)}$, where $\Gamma(0)=0$ and $\Gamma(1)=\log(\Sigma)\in\mathbb{H}_n$. The length of this curve with respect to the Finsler structure \cref{Finsler structure} is 
\begin{align*}
L_{\Phi}[\gamma] = \int_0^1 \|\gamma'(t)\|_{\gamma(t),\Phi}dt &=\int_0^1\|\gamma(t)^{-1/2}\gamma'(t)\gamma(t)^{-1/2}\|_{\Phi}dt \\
&= \int_0^1 \|\gamma(t)^{-1/2}de^{\Gamma(t)}(\Gamma'(t))\gamma(t)^{-1/2}\|_{\Phi}dt \\
&\geq \int_0^1\|\Gamma'(t)\|_{\Phi}dt.
\end{align*}
The last integral is simply the length of the curve $\Gamma$ in $\mathbb{H}_n$ and the least value it can take is $\|\log\Sigma\|_{\Phi}$, which is attained by the straight line segment from $0$ to $\log\Sigma$ in $\mathbb{H}_n$. The distance between $I$ and $\Sigma$ is defined as
$d_{\Phi}(I,\Sigma) = \inf_{\gamma} L_{\Phi}[\gamma]$,
where the infimum is taken over all smooth curves $\gamma$ from $I$ to $\Sigma$. Therefore, we see that
\begin{equation*}
d_{\Phi}(I,\Sigma) = \|\log \Sigma\|_{\Phi},
\end{equation*}
and note that this distance is attained by the curve $\gamma(t)=e^{t\log\Sigma}$, which is a geodesic from $I$ to $\Sigma$. See \cref{fig exp}.

\begin{figure}
\centering
\includegraphics[width=0.75\linewidth]{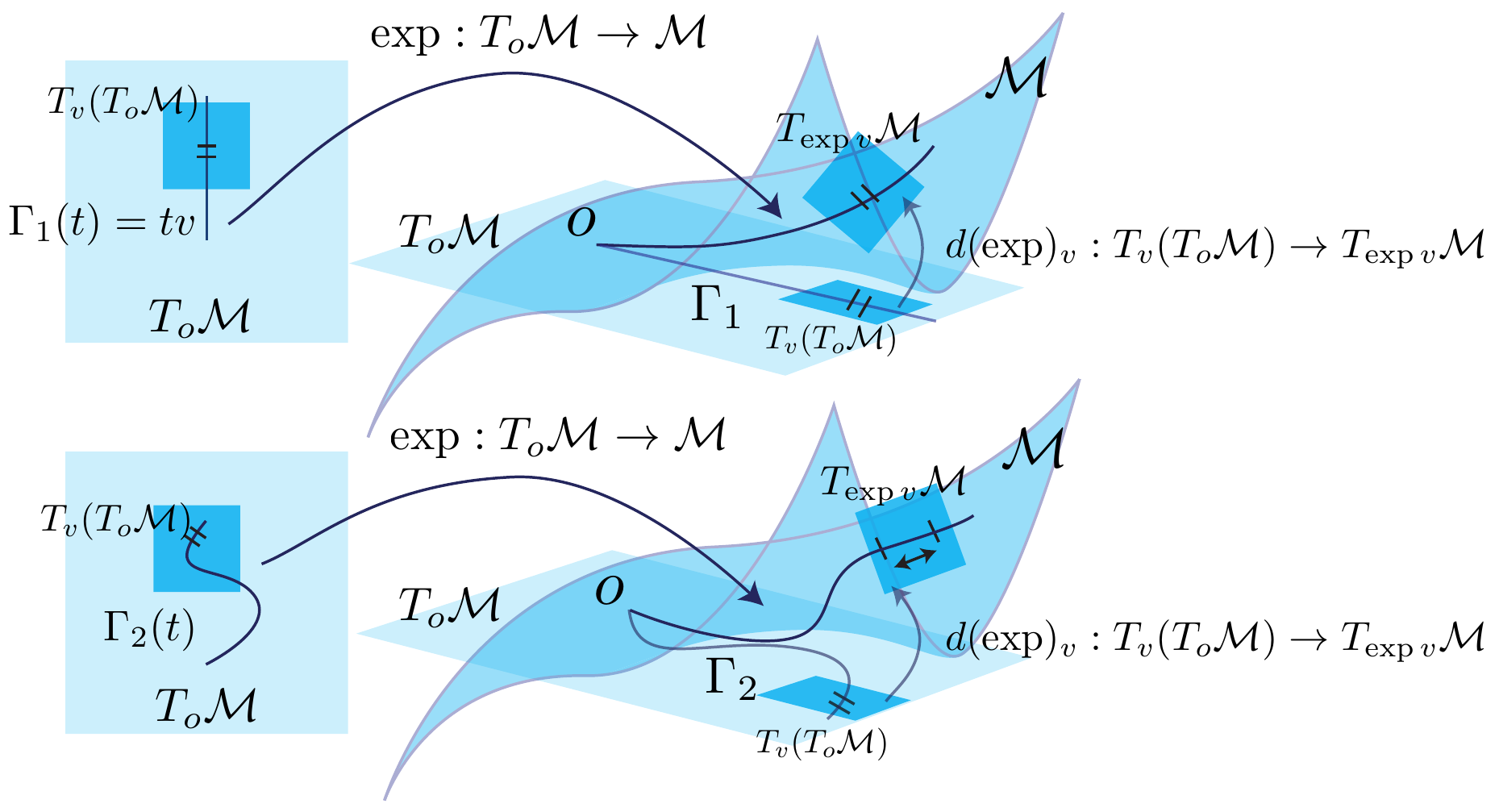}
  \caption{The figure depicts a manifold $\mathcal{M}$ whose exponential map preserves the length of rays through the origin (top), but generally increases the length of curves (bottom) as measured with respect to some Finsler structure on $\mathcal{M}$. $(\mathbb{P}_n,d_{\Phi})$ is a manifold that satisfies such an exponential metric increasing property.
   }
   \label{fig exp}
\end{figure}

Since  congruence transformations are isometries of $(\mathbb{P}_n,d_{\Phi})$, it follows that the curve $\gamma_{\mathcal{G}}^{A,B}(t) = A^{1/2}\exp(t\log(A^{-1/2}BA^{-1/2}))A^{1/2}$,
is a geodesic from $A$ to $B$. Note that this is precisely in agreement with \cref{R geodesic}. 
This geodesic is unique provided that the geodesics in $\mathbb{R}^n$ induced by $\Phi$ are unique. In particular, uniqueness of geodesics in $(\mathbb{P}_n,d_{\Phi})$ is inherited from $\mathbb{R}^n$ when $\Phi$ corresponds to the $l_p$-norms for $1<p<\infty$, but not for $p=1,\infty$. 

The exponential metric increasing property can also be used to show that the metric space $(\mathbb{P}_n,d_{\Phi})$ is a space of non-positive curvature for any choice of $\Phi$. See \cite{Bhatia2003,Bridson2013} for further details. A closely related result is the geodesic convexity \cite{Lawson2007} of $d_{\Phi}$, which follows from the inequality $d_{\Phi}(e^{tX},e^{tZ})\leq t d_{\Phi}(e^{X},e^{Z})$, 
for all $X,Z\in\mathbb{H}_n$ and $0\leq t \leq 1$. More generally, the geodesic convexity theorem states that for all $A_1,A_2,B_1,B_2\in\mathbb{P}_n$, the real function
\begin{equation} \label{geodesic convexity 1}
t \mapsto d_{\Phi}\left(\gamma_{\mathcal{G}}^{A_1,A_2}(t),\gamma_{\mathcal{G}}^{B_1,B_2}(t)\right)
\end{equation}
is convex for any symmetric gauge norm $\Phi$ \cite{Bhatia2003}.

In geometric midrange statistics we are interested in the distance $d_{\infty}$, which coincides with the Thompson metric on the cone of positive definite Hermitian matrices.
It is known that the Thompson metric does not admit unique minimal geodesics. Indeed, a remarkable construction by Nussbaum in \cite{Nussbaum1994} describes a family of geodesics that generally consists of an infinite number of curves connecting a pair of points in a cone $C$. In particular, setting $\alpha:=1/M(x/y;C)$ and $\beta:=M(y/x;C)$, the curve $\phi:[0,1]\rightarrow C$ given by
\begin{equation} \label{Nussbaum}
\phi(t;x,y):=\begin{dcases}
\left(\frac{\beta^t-\alpha^t}{\beta-\alpha}\right)y+\left(\frac{\beta\alpha^t-\alpha\beta^t}{\beta-\alpha}\right)x \quad &\mathrm{if} \; \alpha\neq\beta, \\
\alpha^t x &\mathrm{if} \; \alpha=\beta,
\end{dcases}
\end{equation}
is always a minimal geodesic from $x$ to $y$ with respect to the Thompson metric. The curve $\phi$ defines a projective straight line in the cone. If we take $C$ to be the cone of positive semidefinite matrices with interior $\operatorname{int} C= \mathbb{P}_n$, then for a pair of points $A,B\in\mathbb{P}_n$, we have $\beta=M(B/A;C)=\lambda_{\max}(BA^{-1})$ and $\alpha=1/M(A/B;C)=\lambda_{\min}(BA^{-1})$. Thus, the minimal geodesic described by \cref{Nussbaum} takes the form
\begin{equation} \label{Nussbaum matrix}
\phi(t):=\begin{dcases}
\left(\frac{\lambda_{\max}^t-\lambda_{\min}^t}{\lambda_{\max}-\lambda_{\min}}\right)B+\left(\frac{\lambda_{\max}\lambda_{\min}^t-\lambda_{\min}\lambda_{\max}^t}{\lambda_{\max}-\lambda_{\min}}\right)A &\mathrm{if} \; \lambda_{\min}\neq\lambda_{\max}, \\
\lambda_{\min}^t A &\mathrm{if} \; \lambda_{\min}=\lambda_{\max},
\end{dcases}
\end{equation}
where $\lambda_{\max}$ and $\lambda_{\min}$ denote the largest and smallest eigenvalues of $BA^{-1}$, respectively. Taking the midpoint $t=1/2$ of this geodesic, we recover the $d_{\infty}$-midpoint $A*B$ \cref{star} of $A$ and $B$. Thus, we have arrived at another interpretation of $A*B$ as the midpoint of a suitable geodesic in $(\mathbb{P}_n,d_{\infty})$. 
The result follows from elementary algebraic simplification upon setting $A*B=\phi(1/2;A,B)$ in the case $\lambda_{\min}\neq \lambda_{\max}$. If $\lambda_{\min}=\lambda_{\max}$, then $\phi(1/2;A,B)=\sqrt{\lambda_{\min}}A$ also agrees with the formula in \cref{star}. It is shown in \cite{Lim2013} that $\phi=\phi(t;A,B)$ is the unique $d_{\infty}$ geodesic connecting $A$ to $B$ if and only if the spectrum of $BA^{-1}$ consists of at most two distinct eigenvalues, one of which is the reciprocal of the other. Moreover, it is shown that otherwise there are infinitely many $d_{\infty}$ minimal geodesics from $A$ to $B$, and that the set of $d_{\infty}$-midpoints of $A$ and $B$ is compact and convex in both Riemannian and Euclidean senses \cite{Lim2013}.

\section{The $N$-point geometric midrange problem} 
\label{sec:N point}

Given a collection of $N$ points $Y_1,\dots,Y_N$ in $\mathbb{P}_n$, the midrange problem can be formulated as the following optimization problem
\begin{align}  \label{N geo midrange 1}
\min_{X \succ0} \; \max_i \; d_{\infty}(X,Y_i).
\end{align}
We call a solution $X^{\opts}$ to the above problem a midrange of $\{Y_i\}$. Note that the cost function $f(X):=\max_i d_{\infty}(X,Y_i)$ is continuous but not smooth. That is, \cref{N geo midrange 1} is a non-smooth continuous optimization problem on a smooth Finsler manifold.

\begin{proposition}
The optimum cost $t^\opts = \min_{X\succ 0}\max_id_{\infty}(X,Y_i)$ of \cref{N geo midrange 1} satisfies $l \leq t^\opts \leq u$, where the lower and upper bounds are given by
\begin{equation}
l = \frac{1}{2}\operatorname{diam}_{\infty}(\{Y_i\}):=\frac{1}{2}\max_{i,j}d_{\infty}(Y_i,Y_j), \quad u = \min_i \max _j d_{\infty}(Y_i,Y_j) \leq 2l.
\end{equation}
\end{proposition}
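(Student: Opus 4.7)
The plan is to establish the three inequalities $l \leq t^\opts$, $t^\opts \leq u$, and $u \leq 2l$ separately, each using only the metric axioms for $d_\infty$ and the definitions of $l$, $u$, $t^\opts$.

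For the lower bound $l \leq t^\opts$, I would invoke the triangle inequality for the Thompson metric. For any admissible $X \succ 0$ and any indices $i,j$,
\begin{equation*}
d_{\infty}(Y_i, Y_j) \leq d_{\infty}(Y_i, X) + d_{\infty}(X, Y_j) \leq 2 \max_k d_{\infty}(X, Y_k).
\end{equation*}
Taking the maximum over $i,j$ on the left gives $2l \leq 2 \max_k d_{\infty}(X, Y_k)$, i.e., $l \leq \max_k d_{\infty}(X, Y_k)$. Since this holds for every $X \succ 0$, taking the infimum over $X$ yields $l \leq t^\opts$.

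For the upper bound $t^\opts \leq u$, I would exhibit a feasible $X$ attaining cost $u$. Choose $i^\opts \in \arg\min_i \max_j d_{\infty}(Y_i, Y_j)$ and set $X = Y_{i^\opts}$; this is feasible because $Y_{i^\opts} \succ 0$. Then
\begin{equation*}
\max_k d_{\infty}(X, Y_k) = \max_k d_{\infty}(Y_{i^\opts}, Y_k) = u,
\end{equation*}
so $t^\opts \leq u$ by definition of the infimum.

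Finally, for $u \leq 2l$, it suffices to note that for each individual index $i$,
\begin{equation*}
\max_j d_{\infty}(Y_i, Y_j) \leq \max_{i',j} d_{\infty}(Y_{i'}, Y_j) = 2l,
\end{equation*}
and therefore the minimum over $i$ on the left is also at most $2l$, giving $u \leq 2l$. There is no real obstacle here: the entire statement is a direct consequence of the triangle inequality plus a well-chosen feasible point, and the role of the proposition is really to record the classical fact that any $1$-center lies within a factor $2$ of optimal when replaced by one of the data points.
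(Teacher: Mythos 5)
Your proof is correct and follows essentially the same strategy as the paper's: the triangle inequality yields $l \le t^\opts$, and evaluating the objective at the best data point $Y_{i^\opts}$ yields $t^\opts \le u$. One small point in your favor: you also supply an explicit argument for $u \le 2l$, which the paper asserts in the proposition statement but does not spell out in its proof, and you avoid assuming the minimizer is attained by passing to an infimum over all feasible $X$ rather than fixing an optimal $X^\opts$ at the outset.
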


\begin{proof}
Let $X^\opts$ denote a midrange of $\{Y_i\}$ so that $t^\opts = \max_i d_{\infty}(X^\opts,Y_i)$.
By the triangle inequality, we have for any $i,j = 1, \ldots, N$,
\begin{equation*}
d_{\infty}(Y_i,Y_j) \leq d_{\infty}(Y_i,X^\opts) + d_{\infty}(X^\opts,Y_j) \leq t^\opts + t^\opts = 2t^\opts.
\end{equation*}
Taking the maximum of the left-hand side over $i,j$, we arrive at $l= \frac{1}{2}\operatorname{diam}_{\infty}(\{Y_i\})\leq t^\opts$. For the upper bound, note that taking $X=Y_i$ for each $i$, we obtain a cost $f(Y_i)=\max_j d_{\infty}(Y_i,Y_j)$. The minimum value of these $N$ cost evaluations will clearly still yield an upper bound on the optimum cost $t^\opts$. Thus we have $t^\opts\leq u =  \min_i \max _j d_{\infty}(Y_i,Y_j)$.
\end{proof}

Note that it is possible to have a collection of points $\{Y_i\}$ for which either $l$ or $u$ is attained. For instance, $l$ is clearly attained by the $d_{\infty}$-midpoint when $\{Y_i\}$ consists of a pair of points. Similarly, $u$ is attained if we have 3 points $Y_1,Y_2,Y_3$, where $Y_3$ happens to be a $d_{\infty}$-midpoint of $Y_1$ and $Y_2$. In general, the upper bound is attained if the midrange coincides with one of the data points.

It is instructive to consider the $N$-point affine-invariant midrange of vectors in the positive orthant. In the vector case, the midrange problem in $\mathbb{R}^n_+$ takes the form
\begin{equation} \label{vector midrange problem}
\min_{\boldsymbol{x}>0} \; \max_i \; \|\log\boldsymbol{x}-\log\boldsymbol{y}_i\|_{\infty} := \min_{\boldsymbol{x}>0} \; \max_i \; \max_a \;|\log x^a-\log y^a_i| ,
\end{equation}
where $\boldsymbol{x}>0$ means that $\boldsymbol{x}=(x^a)$ satisfies $x^a>0$ for $a=1,\ldots,n$ and $\boldsymbol{y_i}$ are a collection of $N$ given points in $\mathbb{R}^n_+$. As in the matrix case, the optimum cost $t^\opts=\min_{\boldsymbol{x}>0}f(\boldsymbol{x})=\min_{\boldsymbol{x>0}}\max_i \|\log\boldsymbol{x}-\log\boldsymbol{y}_i\|_{\infty}$ has a lower bound 
\begin{equation} \label{lower vector}
l=\frac{1}{2}\max_{i,j}\|\log\boldsymbol{y}_i-\log\boldsymbol{y}_j\|_{\infty}.
\end{equation}

\begin{proposition} \label{vector midpoint}
The lower bound \cref{lower vector} is attained by $\boldsymbol{x}^{\star}=(x^a)\in\mathbb{R}^n_+$ defined by
$x^a=\left(\min_i y_i^a \cdot \max_i y_i^a\right)^{1/2}$.
\end{proposition}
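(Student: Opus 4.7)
The plan is to exploit the coordinatewise structure of the $\ell_{\infty}$-norm to decompose the vector midrange problem into $n$ independent scalar midrange problems, each of which was already solved in the introduction (see formula \cref{scalar}).

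First I would rewrite the cost function by expanding the $\ell_{\infty}$-norm and swapping the order of the two outer $\max$ operators. Namely, for any $\boldsymbol{x}>0$,
\begin{equation*}
f(\boldsymbol{x}) \;=\; \max_i \max_a \bigl|\log x^a - \log y_i^a\bigr| \;=\; \max_a \max_i \bigl|\log x^a - \log y_i^a\bigr| \;=\; \max_a g_a(x^a),
\end{equation*}
where $g_a(x) := \max_i |\log x - \log y_i^a|$. Because $g_a$ depends only on the single coordinate $x^a$, the minimization over $\boldsymbol{x}>0$ decouples: any vector whose $a$-th component minimizes $g_a$ for every $a$ minimizes $f$.

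Next I would apply the scalar affine-invariant midrange result already recorded in \cref{scalar}. For each fixed coordinate $a$, $g_a$ is minimized at
\begin{equation*}
x^{\star,a} \;=\; \Bigl(\min_i y_i^a \cdot \max_i y_i^a\Bigr)^{1/2},
\end{equation*}
with optimal value $g_a(x^{\star,a}) = \tfrac{1}{2}\bigl(\log \max_i y_i^a - \log \min_i y_i^a\bigr) = \tfrac{1}{2}\max_{i,j}\bigl|\log y_i^a - \log y_j^a\bigr|$. Assembling these componentwise minimizers gives precisely the $\boldsymbol{x}^{\star}$ of the proposition.

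Finally I would compute the resulting optimum value and match it against $l$. Swapping the order of max once more,
\begin{equation*}
f(\boldsymbol{x}^{\star}) \;=\; \max_a \tfrac{1}{2}\max_{i,j}\bigl|\log y_i^a - \log y_j^a\bigr| \;=\; \tfrac{1}{2}\max_{i,j}\max_a\bigl|\log y_i^a - \log y_j^a\bigr| \;=\; \tfrac{1}{2}\max_{i,j}\|\log\boldsymbol{y}_i - \log\boldsymbol{y}_j\|_{\infty} = l,
\end{equation*}
so the lower bound is attained. There is no genuine obstacle here: the only nontrivial ingredient is the scalar midrange identity \cref{scalar}, and the argument is essentially bookkeeping about the commutativity of $\max$ with itself. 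The mild caveat is that $\boldsymbol{x}^{\star}$ need not be the unique minimizer, since the scalar $g_a$ can be degenerate in coordinates where $\min_i y_i^a = \max_i y_i^a$ and, more importantly, coordinates $a$ that do not attain the outer $\max_a$ impose no constraint on the choice of $x^{\star,a}$; but uniqueness is not asserted in the proposition.
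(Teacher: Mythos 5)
Your proof is correct and arrives at the same final verification as the paper: both show $f(\boldsymbol{x}^{\star})=l$ by expanding the $\ell_\infty$-norm coordinatewise and using $\max_{i,j}|\log y_i^a-\log y_j^a| = \log(\max_i y_i^a/\min_j y_j^a)$. The only difference is that you preface the verification with the coordinatewise decoupling $f(\boldsymbol{x})=\max_a g_a(x^a)$ and the scalar midrange identity \cref{scalar}, which explains where $\boldsymbol{x}^{\star}$ comes from rather than merely checking it; the paper skips this motivational step and directly evaluates $f(\boldsymbol{x}^{\star})$, relying on the already-established fact that $l$ is a lower bound.
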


\begin{proof}
Note that 
\begin{equation*}
l = \frac{1}{2}\max_{i,j}\;\max_a \bigg|\log\frac{y_i^a}{y_j^a}\bigg| = \frac{1}{2}\max_a \bigg |\log\frac{\max_i y_i^a}{\min_j y_j^a}\bigg|.
\end{equation*}
With $\boldsymbol{x}^{\star}$ as defined in \cref{vector midpoint}, we have
\begin{align*}
f(\boldsymbol{x}^{\star})&=\max_{k,a} \bigg| \frac{1}{2}\log\left(\min_i y_i^a \cdot\max_i y_i^a\right)-\log y_k^a\bigg| = \max_{k,a}\bigg|\frac{1}{2}\log\left(\frac{\min_i y_i^a \cdot \max_i y_i^a}{y_k^a\cdot y_k^a}\right)\bigg|  \\
& = \max_a \bigg|\frac{1}{2}\log\left(\frac{\min_i y_i^a \cdot \max_i y_i^a}{(\min_iy_i^a)^2}\right)\bigg|  = \max_a\bigg|\frac{1}{2}\log\left(\frac{\max_i y_i^a}{\min_j y_j^a}\right)\bigg| = l.
\end{align*}
\end{proof}

\begin{remark}
The midrange problem does not generally have a unique solution in the vector case as can be readily seen through simple examples. For instance, the problem in $\mathbb{R}_+^2$ with $N=2$ and $\boldsymbol{y}_1=(a,1)$, $\boldsymbol{y}_2=(1/a,1)$ for some $a>1$ has the solution $\boldsymbol{x}=(1,s)$ for any $s$ satisfying $1/a<s<a$. 
\end{remark}

\subsection{Geometric midranges via convex optimization}
\label{subsec: quasiconvex}

The geometric matrix midrange problem \cref{N geo midrange 1} can be written as
		   \begin{equation*}
	\label{eq:opt_lambda}
	\min_{X\succ 0}  \max_{i}  \{|\log (\lambda_{\min}(Y_i^{-1/2} X Y_i^{-1/2}))|,|\log( \lambda_{\max}(Y_i^{-1/2} X Y_i^{-1/2}))| \},
	\end{equation*} 
which has the equivalent epigraph formulation
\begin{equation*}
	\begin{cases}
	 \min_{X \succ 0,\,t\in\mathbb{R}}    t\\
	 -t \leq   \log(\lambda_{\max}(Y_i^{-\frac{1}{2}} X Y_i^{-\frac{1}{2}})) \leq t \ \text{ for all }\ i\\
	 -t \leq  \log(\lambda_{\min}(Y_j^{-\frac{1}{2}} X Y_j^{-\frac{1}{2}})) \leq t \ \text{ for all }\ j.
	\end{cases}
	\end{equation*} 
This can be rewritten as the \emph{quasiconvex} problem
\begin{equation}  \label{matrix midrange quasi}
\begin{cases}
\min_{X \succ 0,\,t\in\mathbb{R}}\; t \\
e^{-t}Y_i \preceq X \preceq e^{t}Y_i.
\end{cases}
\end{equation}
While this problem is not convex due to the presence of the $\log$ function, the feasibility condition $e^{-t}Y_i \preceq X \preceq e^{t}Y_i$ is convex for \emph{fixed} $t$ and can be solved using standard convex optimization packages such as CVX \cite{cvx}. Given a $t$ that is greater than or equal to the optimum value $t^\opts=\min_{X\succ 0}  \max_i \; d_{\infty}(X,Y_i)$, we can solve \cref{matrix midrange quasi} using the bisection method \cite{Boyd2004} by successively solving the feasibility problem as we effectively decrease $t$. In the bisection method it is desirable to have a good estimate for the initial $t$ as the successive reductions in $t$ can be quite slow. In particular, if the lower bound $l=\frac{1}{2}\operatorname{diam}_{\infty}(\{Y_i\})$ is attained as in the vector case, then we can solve \cref{matrix midrange quasi} in one step by taking $t=l$ and solving the feasibility condition once. However, rather remarkably, numerical examples show that unlike the scalar and vector case, the lower bound $l$ is not always attained in the geometric matrix midrange problem. 

\begin{proposition} \label{lower bound matrix}
The lower bound $l=\frac{1}{2}\operatorname{diam}_{\infty}(\{Y_i\})$ is not necessarily attained in \cref{N geo midrange 1}.
\end{proposition}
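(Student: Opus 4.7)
Since the statement is a non-attainment claim, the plan is to exhibit an explicit counterexample rather than to prove a general structural result. The natural place to look is $n = 2$ with $N = 3$, since the scalar case ($n=1$) and, more generally, the commuting case reduce to vectors in $\mathbb{R}^n_+$ where \cref{vector midpoint} shows that the lower bound is always attained. Thus the counterexample must involve positive definite matrices that are not simultaneously diagonalizable.

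First I would construct three matrices of the form $Y_i = R_{\theta_i}\,\mathrm{diag}(\lambda,1/\lambda)\,R_{\theta_i}^T$ for a fixed $\lambda>1$ and three distinct angles $\theta_1,\theta_2,\theta_3$ (for instance equally spaced in $[0,\pi)$). By affine invariance of $d_\infty$ and the rotational symmetry of the construction, the three pairwise Thompson distances are equal, so $l=\tfrac12 d_\infty(Y_i,Y_j)$ for any $i\neq j$, and this common value can be computed in closed form from the extremal eigenvalues of the pencils $(Y_j,Y_i)$.

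Next I would invoke the quasiconvex reformulation \cref{matrix midrange quasi}: attainment of $l$ is equivalent to the existence of $X\succ 0$ satisfying
\begin{equation*}
e^{-l} Y_i \preceq X \preceq e^{l}Y_i \qquad \text{for } i=1,2,3.
\end{equation*}
Using affine invariance, I can conjugate by $Y_1^{-1/2}$ to normalize $Y_1=I$, which turns the first constraint into $e^{-l}I\preceq \tilde X\preceq e^{l}I$, i.e.\ the spectrum of $\tilde X$ is confined to $[e^{-l},e^{l}]$. The remaining two constraints then become explicit Löwner inequalities involving rotated copies of $\mathrm{diag}(\lambda,1/\lambda)$. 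The goal is to show that, for the chosen $\lambda$ and angles, this feasibility system has no solution at $t=l$ while becoming solvable at some $t>l$; equivalently, the minimum $t$ in \cref{matrix midrange quasi} exceeds $l$.

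The main obstacle will be the joint infeasibility analysis: the Löwner lower bounds $X\succeq e^{-l}Y_i$ need not admit a common majorant when the $Y_i$ fail to commute, but converting this into a clean inequality requires some care. In the two-dimensional setting the problem reduces to a small, explicit set of scalar eigenvalue inequalities (each Löwner inequality in $\mathbb{P}_2$ amounts to nonnegativity of determinant and trace of a $2\times 2$ Hermitian matrix), which can be checked by direct computation. Since the paper explicitly flags this as something revealed by \emph{numerical examples}, the cleanest presentation is to state a concrete triple $(Y_1,Y_2,Y_3)$ with specific entries and numerically verify that the bisection method applied to \cref{matrix midrange quasi} converges to a $t^{\opts}$ strictly larger than $l$, with the gap $t^{\opts}-l$ bounded below by a small explicit positive constant.
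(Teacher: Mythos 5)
Your plan follows essentially the same route as the paper: pick $n=2$, $N=3$ with non-commuting matrices, exhibit a concrete triple, and certify $t^\opts > l$ by solving the quasiconvex feasibility problem \cref{matrix midrange quasi} numerically. The paper does exactly this, presenting three specific $2\times 2$ matrices with $l = 0.7880$ and an optimal midrange with $t^\opts = 0.7901$, so the approaches are a match in substance.

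The one genuine difference is that you propose a structured family $Y_i = R_{\theta_i}\,\mathrm{diag}(\lambda,1/\lambda)\,R_{\theta_i}^T$ with equally-spaced angles so that all pairwise Thompson distances coincide. That is an attractive refinement — it removes one source of asymmetry and makes $l$ trivial to compute — but as written it is still a \emph{plan}: you have not actually verified, analytically or numerically, that $t^\opts > l$ for any concrete choice of $\lambda$ and the angles, and the rotational symmetry by itself does not force non-attainment. The paper sidesteps the question of elegance entirely and simply reports a numerical instance where the gap is observed. So your proposal is sound in strategy and arguably more illuminating in its choice of example, but to constitute a proof you would need to actually carry out the feasibility check at $t=l$ (in $\mathbb{P}_2$ this reduces, as you note, to sign conditions on traces and determinants of a few $2\times 2$ Hermitian matrices) and exhibit the resulting contradiction or the numerically certified gap, as the paper does.
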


\begin{proof}
Consider the $N=3$ geometric midrange problem in $\mathbb{P}_2$ for
\begin{equation*}
Y_1= \begin{pmatrix}
0.95 & -0.6 \\
-0.6 & 1.1
\end{pmatrix} \quad 
Y_2= \begin{pmatrix}
1.0 & 0.5 \\
0.5 & 2.1
\end{pmatrix} \quad 
Y_3= \begin{pmatrix}
2.5 & -0.2 \\
-0.2 & 1.2
\end{pmatrix}. 
\end{equation*}
The lower bound $l$ is computed to be $\frac{1}{2}\operatorname{diam}_{\infty}(\{Y_1,Y_2,Y_3\})=0.7880$. On the other hand, solving the quasiconvex optimization problem \cref{matrix midrange quasi} via the bisection method yields the midrange 
\begin{equation*}
X^\opts= \begin{pmatrix}
1.3154 & -0.5321 \\
-0.5321 & 1.6217
\end{pmatrix}  
\end{equation*}
with minimum cost $t^\opts= 0.7901 > 0.7880 = l$. Indeed, we have $t^\opts=d_{\infty}(X^\opts,Y_1)=d_{\infty}(X^\opts,Y_2)=d_{\infty}(X^\opts,Y_3)$.
\end{proof}

\begin{definition} \emph{(Active matrices)}
Let $N \geq 2$ and $(X^\opts,t^\opts)$ be a solution to \cref{matrix midrange quasi}. Then $Y_j$ is called active if $d_{\infty}(X^{\opts},Y_j)=t^{\opts}$. In particular,
at least one of the following must hold:
\begin{equation*}
-\log(\lambda_{\min}(Y_j^{-\frac{1}{2}} {X}^\opts Y_j^{-\frac{1}{2}})) =  t^\opts \quad \mathrm{or} \quad \log(\lambda_{\max}(Y_j^{-\frac{1}{2}} {X}^\opts Y_j^{-\frac{1}{2}})) = t^\opts.
\end{equation*}
\end{definition}

\cref{lower bound matrix} suggests that the $N$-point matrix midrange problem is richer than the vector case in fundamental ways. While the bisection method applied to the quasiconvex problem \cref{matrix midrange quasi} offers a solution, it can be quite slow due to the need for multiple bisection steps and the requirement to compute a reasonable upper bound estimate of the optimum cost for initialization. However, it is possible to recast \cref{matrix midrange quasi} as a convex optimization problem and thereby obtain a dramatic improvement in efficiency by introducing new variables. Specifically, by introducing $\xi=e^t$ and $\tau=e^{-t}$, and adding the extra convex constraint that $1/\xi-\tau\leq 0$, we find that \cref{matrix midrange quasi} can be reformulated as the convex optimization problem
\begin{equation} \label{Convex formulation}
\begin{cases}
\min_{X\succeq 0, \, \xi\in\mathbb{R}, \, \tau\in\mathbb{R}}\;\xi \\
\tau Y_i \preceq X \preceq \xi Y_i \\
1/\xi - \tau \leq 0
\end{cases}
\end{equation}
which can generally be solved much more efficiently than \cref{matrix midrange quasi} using standard convex optimization techniques and software packages. 

\begin{example}
As an example,  we use \cref{Convex formulation} to compute the geometric midrange of $N=1000$ real symmetric positive definite $2\times 2$ matrices. The data was generated as $Y_j=\Sigma+A_j^TA_j$, for $j=1,\cdots,N$, where the $A_j$ are $2\times 2$ matrices with normally distributed entries and $\Sigma \succ 0$ is a fixed matrix. The data matrices can be represented as points in a cone in $\mathbb{R}^3$ via the the bijection
\begin{equation*}
	\begin{pmatrix} a & b \\ b & c \end{pmatrix} \mapsto \left( \sqrt{2}b, \frac{1}{\sqrt{2}}(a-c), \frac{1}{\sqrt{2}}(a+c)\right)
\end{equation*}
\end{example}
as described in \cite{Mostajeran2018}. \cref{fig N point} shows a visualization of the results of the computation in $\mathbb{R}^3$ from two perspectives. The surrounding open cone represents the boundary of the set of real symmetric positive definite matrices and the cloud of points in gray are the data points $Y_i$. At optimum, we find that there are 4 active points that are highlighted in red, two of which nearly coincide in the figure. The geometric midrange is highlighted in blue and is the center of the Thompson sphere of radius $t^{\opts}$ that defines the smallest enclosing Thompson ball of the data. Note how the active points lie on this sphere. It is interesting that the Thompson ball is the intersection of two cones in this representation. For the sake of comparison, the Karcher mean of the data is also included as a solid black point.

\begin{figure}
\centering
\includegraphics[width=1\linewidth]{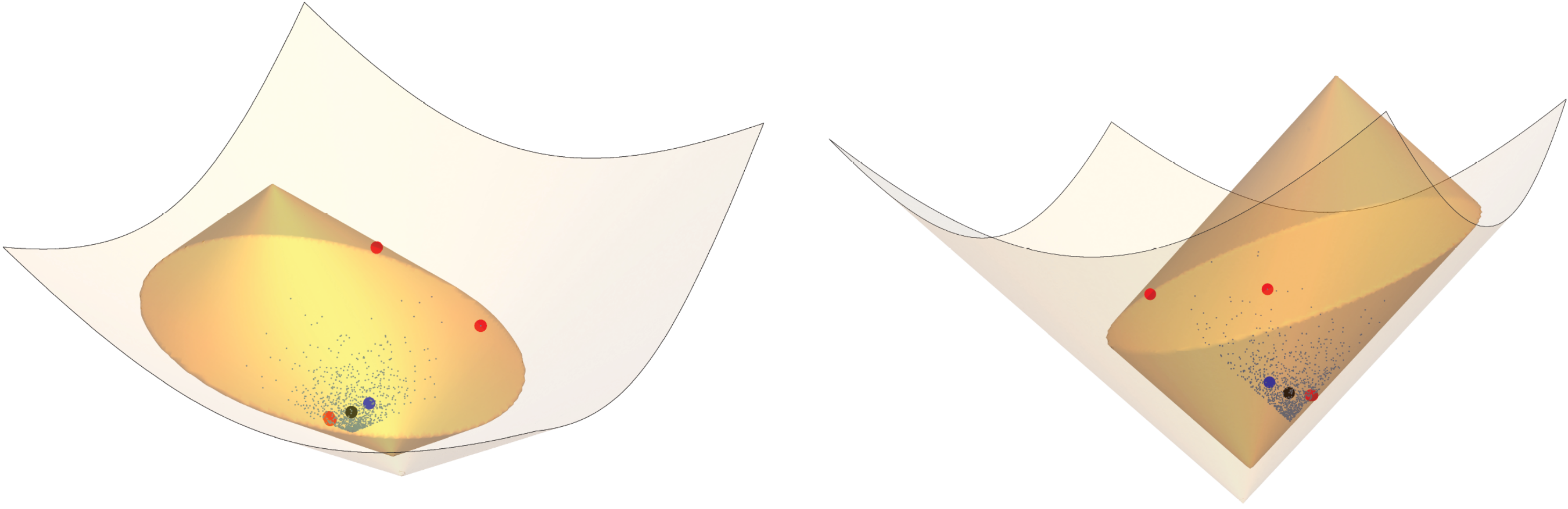}
  \caption{The geometric midrange (blue point), Karcher mean (black point), active data matrices (red points), and smallest enclosing Thompson ball for an example with $N=1000$ matrices depicted in $\mathbb{R}^3$ from two perspectives.
   }
   \label{fig N point}
\end{figure}

\begin{remark}
The preceding analysis provides an interesting example of a nonconvex optimization problem that admits a reformulation as a convex optimization problem in the Euclidean sense through a nonlinear change of coordinates.
\end{remark}

While the convex formulation \cref{Convex formulation} offers a dramatic improvement to the bisection algorithm applied to the quasiconvex formulation of the problem, we
expect that yet more efficient solutions to the problem can be found. In particular, conventional SDP-solvers are based on interior point methods with fast convergence, but high cost per iteration \cite{peaucelle2002user,toh2004implementation}, which makes them less suitable for matrices of larger size. Alternatively, one may consider so-called proximal splitting methods such as alternating projections, Douglas-Rachford, or the alternating direction method of multipliers (ADMM) \cite{BoydDistributed, combettes2011proximal, douglas1956numerical, eckstein1992douglas} applied to \cref{matrix midrange quasi}, which have cheap cost per iteration. Unfortunately, these methods tend to have poor convergence properties when the optimal solution is an intersection point of the boundaries of two convex sets with a small intersection angle \cite{faelt2017line,faelt2017optimal}. Indeed, our numerical experiments indicate that the rates of convergence of such methods degrade as $t$ gets close to the true minimum.  This can be expected as each $Y_i$ in \cref{matrix midrange quasi} defines a bounding box for $X$ through the inequality constraint and $X$ cannot be an interior point to all of them. Ideally, an efficient algorithm for solving this problem would principally rely on the computation of dominant generalized eigenpairs as in the $N=2$ case for which very efficient algorithms exist. In the next subsection, we will consider the optimality conditions for the geometric midrange problem in more detail. Before doing so, we note the following special case for which the $N$-point midrange problem reduces to the 2-point problem as in the scalar case.

\begin{proposition} \label{ordered midrange}
If $Y_1,\dots,Y_N$ are such that $Y_1\preceq Y_i \preceq Y_N$ for all $i = 1, \dots, N$, then the geometric midrange of $\{Y_i\}$  is given by the set of $d_{\infty}$-midpoints of $Y_1$ and $Y_N$. 
\end{proposition}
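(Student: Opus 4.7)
The plan is to reduce the $N$-point problem to the $2$-point problem between the extremes $Y_1$ and $Y_N$ by showing that the lower bound $l = \tfrac12 \operatorname{diam}_\infty(\{Y_i\})$ is both (a) equal to $\tfrac12 d_\infty(Y_1,Y_N)$, and (b) attained precisely by the $d_\infty$-midpoints of the pair $(Y_1,Y_N)$.

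First I would compute the diameter. Recall that $d_\infty(A,B)$ equals the infimum of $t\geq 0$ with $e^{-t}B \preceq A \preceq e^t B$. Setting $t := d_\infty(Y_1,Y_N)$, the sandwich $Y_1 \preceq Y_N \preceq e^{t} Y_1$ holds (the lower half is the hypothesis; the upper half is the definition of $t$ since $Y_1 \preceq Y_N$ forces $d_\infty(Y_1,Y_N)=\log\lambda_{\max}(Y_N Y_1^{-1})$). For any $i,j$, applying $Y_1 \preceq Y_i, Y_j \preceq Y_N$ together with this sandwich yields $Y_i \preceq Y_N \preceq e^t Y_1 \preceq e^t Y_j$ and symmetrically $Y_j \preceq e^t Y_i$, hence $d_\infty(Y_i,Y_j) \leq t$. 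Thus $\operatorname{diam}_\infty(\{Y_i\}) = d_\infty(Y_1,Y_N)$, and the earlier lower bound gives $t^\opts \geq l = \tfrac12 d_\infty(Y_1,Y_N)$.

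Next I would show that any Thompson midpoint $X$ of $(Y_1,Y_N)$ attains this bound. By definition, $d_\infty(X,Y_1) = d_\infty(X,Y_N) = l$, equivalently $e^{-l}Y_1 \preceq X \preceq e^{l}Y_1$ and $e^{-l}Y_N \preceq X \preceq e^{l}Y_N$. For the upper inequality against $Y_i$, combine $X \preceq e^l Y_1$ with $Y_1 \preceq Y_i$ to get $X \preceq e^l Y_i$; for the lower inequality, combine $e^{-l} Y_N \succeq e^{-l} Y_i$ with $e^{-l} Y_N \preceq X$ to get $e^{-l}Y_i \preceq X$. Therefore $d_\infty(X,Y_i)\leq l$ for every $i$, which together with $d_\infty(X,Y_1)=l$ gives $\max_i d_\infty(X,Y_i)=l$, so $X$ is optimal.

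Finally, for the converse, suppose $X^\opts$ is any optimizer, so $\max_i d_\infty(X^\opts,Y_i) = l$. The triangle inequality applied to $Y_1, X^\opts, Y_N$ yields
\begin{equation*}
2l = d_\infty(Y_1,Y_N) \leq d_\infty(Y_1,X^\opts) + d_\infty(X^\opts,Y_N) \leq 2l,
\end{equation*}
forcing equality $d_\infty(X^\opts,Y_1) = d_\infty(X^\opts,Y_N) = l = \tfrac12 d_\infty(Y_1,Y_N)$, which is exactly the definition of a Thompson midpoint of $(Y_1,Y_N)$. The two inclusions establish the claimed equality of sets. The only delicate point in the argument is step one, establishing that the diameter is realized by the extreme pair; once the sandwich $Y_1 \preceq Y_N \preceq e^{t}Y_1$ is noticed, everything else follows mechanically from the order-based characterization of $d_\infty$ and the triangle inequality.
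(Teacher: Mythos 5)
Your proof is correct. It establishes the same conclusion as the paper but through a slightly different route. The paper's argument is a one-step reduction in the epigraph formulation: since $Y_1 \preceq Y_i \preceq Y_N$, the feasibility constraints $e^{-t}Y_i \preceq X \preceq e^t Y_i$ collectively collapse to the single pair $e^{-t}Y_N \preceq X \preceq e^{t}Y_1$, so the $N$-point problem is \emph{literally identical} to the $2$-point problem for $\{Y_1,Y_N\}$; one then cites the fact (established earlier in the paper) that $2$-point optima are precisely the Thompson midpoints. Your proof dispenses with the constraint-set-equivalence step and instead proves the two set inclusions directly: (i) that any Thompson midpoint of $(Y_1,Y_N)$ already achieves cost $l = \tfrac12 d_\infty(Y_1,Y_N)$ by chaining L\"owner inequalities through $Y_1 \preceq Y_i \preceq Y_N$, and (ii) that any optimizer must be such a midpoint via the triangle inequality. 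The core mechanism — the monotonicity of the order constraints under the hypothesis — is the same in both arguments; what the paper's formulation buys is brevity (it reuses the earlier characterization of $2$-point optima and need not re-derive the diameter or invoke the triangle inequality), while your version is more self-contained, making the role of the order-based characterization of $d_\infty$ and the equality case in the triangle inequality explicit rather than implicit.
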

\begin{proof}
The ordering $Y_1 \preceq Y_i \preceq Y_N$ means that the intersection of the feasibility constraints $e^{-t}Y_i \preceq X \preceq e^t Y_i$ in the epigraph formulation \cref{matrix midrange quasi} is simply 
\begin{equation*}
e^{-t}Y_N \preceq X \preceq e^t Y_1.
\end{equation*}
Thus, the optimization problem is unchanged following the elimination of all $Y_i$ for $i\neq 1, N$. Hence, the problem is equivalent to the midrange problem for $\{Y_1,Y_N\}$ and is solved by any $d_{\infty}$-midpoint of this pair. Furthermore, the lower bound $l=\frac{1}{2}\operatorname{diam}_{\infty}(\{Y_i\})=\frac{1}{2}d_{\infty}(Y_1,Y_N)$ is trivially attained.
\end{proof}

\begin{remark}
Note that in the above we do not assume an order relation between $Y_i$ and $Y_j$ for $i,j\neq 1,N$. The value of this result lies in the insight that it provides in how and
why the matrix $N$-point midrange problem diverges from the scalar and vector
case. Fundamentally, no order relation need exist between a pair of matrices, whereas in
the scalar case such an ordering is always possible, and similarly an unambiguous
ordering is possible at the level of coordinates for vectors.
\end{remark}

\subsection{Necessary optimality conditions}\label{subsec: optimality conditions}

Finally, we prove a number of results on the optimality conditions of the geometric matrix midrange problem and the connection between the attainment of the lower bound $l=\frac{1}{2}\operatorname{diam}_{\infty}(\{Y_i\})$ and the number of active matrices at optimum.

	\begin{proposition}
		\label{prop:active}
		Let $N \geq 2$ and $(X^\star,t^\opts)$ be a solution to \cref{matrix midrange quasi}. Then there exist distinct $i^\opts, j^\opts \in \{1,
		\dots, N \}$ such that 
		\begin{equation*}
		\log(\lambda_{\max}(Y_{i^{\opts}}^{-\frac{1}{2}} X^\opts Y_{i^{\opts}}^{-\frac{1}{2}})) = -\log(\lambda_{\min}(Y_{j^{\opts}}^{-\frac{1}{2}} X^\opts Y_{j^{\opts}}^{-\frac{1}{2}})) = t^\opts
		\end{equation*}
	\end{proposition}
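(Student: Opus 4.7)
The proof strategy is a two-step argument by contradiction. The first step establishes that at any optimum both sides of the spectral bound must be active: some upper constraint $\lambda_{\max}(Y_i^{-1/2}X^\opts Y_i^{-1/2}) = e^{t^\opts}$ and some lower constraint $\lambda_{\min}(Y_j^{-1/2}X^\opts Y_j^{-1/2}) = e^{-t^\opts}$ must be saturated. The second step shows that the two saturating indices can be chosen distinct, since otherwise the cost could be strictly decreased.

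For the first step, define the upper-active set $\mathcal{U} = \{i : \log\lambda_{\max}(Y_i^{-1/2}X^\opts Y_i^{-1/2}) = t^\opts\}$ and the lower-active set $\mathcal{L} = \{j : -\log\lambda_{\min}(Y_j^{-1/2}X^\opts Y_j^{-1/2}) = t^\opts\}$. Since $t^\opts = \max_i d_\infty(X^\opts, Y_i)$ and each $d_\infty(X^\opts, Y_i)$ equals $\max\{\log\lambda_{\max}, -\log\lambda_{\min}\}$ of $Y_i^{-1/2}X^\opts Y_i^{-1/2}$, at least one of these sets is nonempty. Suppose $\mathcal{U} = \emptyset$; as there are only finitely many constraints, there exists $\delta > 0$ such that every upper-spectral term is bounded above by $t^\opts - \delta$. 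Replacing $X^\opts$ by $sX^\opts$ with $\log s \in (0,\delta)$, each upper term grows by $\log s$ yet remains strictly below $t^\opts$, while every lower term drops by $\log s$, producing a strictly smaller cost and contradicting optimality. A symmetric downscaling argument handles the case $\mathcal{L} = \emptyset$, so both $\mathcal{U}$ and $\mathcal{L}$ are nonempty.

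For the second step, suppose for contradiction that $\mathcal{U} \cup \mathcal{L} = \{k\}$ for a single index $k$, so that $d_\infty(X^\opts, Y_i) < t^\opts$ strictly for every $i \neq k$. I would then move $X^\opts$ a small distance along the geodesic $\gamma(s) = (X^\opts)^{1/2}\bigl((X^\opts)^{-1/2} Y_k (X^\opts)^{-1/2}\bigr)^s (X^\opts)^{1/2}$ toward $Y_k$. By the discussion in \cref{subsec: geodesics} this curve is a minimal Thompson geodesic, so $d_\infty(\gamma(s), Y_k) = (1-s)\, t^\opts$. By continuity of $d_\infty$ in its first argument, the strict inequalities $d_\infty(X^\opts, Y_i) < t^\opts$ persist along $\gamma(s)$ for small $s > 0$, yielding $\max_i d_\infty(\gamma(s), Y_i) < t^\opts$, again a contradiction. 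Therefore $|\mathcal{U} \cup \mathcal{L}| \geq 2$, and a short case analysis on whether $\mathcal{U} \cap \mathcal{L}$ is empty produces $i^\opts \in \mathcal{U}$ and $j^\opts \in \mathcal{L}$ with $i^\opts \neq j^\opts$. The main obstacle is this second step: ruling out a single common active index requires combining the Thompson-geodesic structure on $\mathbb{P}_n$ with the continuity of the non-smooth cost, whereas the first step reduces to purely scalar rescaling of $X^\opts$.
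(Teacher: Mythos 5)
Your proof is correct, and your first step is essentially the same as the paper's: both use a scalar rescaling of $X^\opts$ (you scale up when the upper-active set $\mathcal{U}$ is empty; the paper scales down when the lower-active set $\mathcal{L}$ is empty) to conclude that $\mathcal{U}$ and $\mathcal{L}$ are both nonempty.

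Your second step, however, goes beyond what the paper's proof actually does. The paper's argument, as written, only rules out the cases $\mathcal{U}=\emptyset$ and $\mathcal{L}=\emptyset$; it does not address the possibility $\mathcal{U}=\mathcal{L}=\{k\}$, where a single index saturates \emph{both} the upper and the lower eigenvalue constraint simultaneously. In that configuration the rescaling trick is powerless, since multiplying $X^\opts$ by a scalar leaves the ratio $\lambda_{\max}(Y_k^{-1/2}X^\opts Y_k^{-1/2})/\lambda_{\min}(Y_k^{-1/2}X^\opts Y_k^{-1/2})$ unchanged, so one of the two bounds is necessarily violated. Your geodesic deformation toward $Y_k$ --- which moves both extremal eigenvalues of $Y_k^{-1/2}\gamma(s)Y_k^{-1/2}$ toward $1$ simultaneously while the strict inequalities for all $i\neq k$ persist by continuity --- is exactly the right tool to dispatch this case, and it is precisely what is needed to justify the word ``distinct'' in the statement. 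So your proof is not merely correct; it closes a step the paper's proof leaves implicit (or, arguably, overlooks). The concluding case analysis ($\mathcal{U}\cap\mathcal{L}$ empty or not) is routine and correct.
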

\begin{proof}
	By the definition of $(X^\opts,t^\opts)$, there exists at least one index $i^\opts$ or $j^\opts$ such that $|\log(\lambda_{\max}(Y_{i^{\opts}}^{-\frac{1}{2}} X^\opts Y_{i^{\opts}}^{-\frac{1}{2}}))| = t^\opts$ or $|\log(\lambda_{\min}(Y_{j^{\opts}}^{-\frac{1}{2}} X^\opts Y_{j^{\opts}}^{-\frac{1}{2}}))| = t^\opts$. In particular, for such $i^\opts$ and $j^\opts$ it must hold that 
	\begin{equation*}
	\log(\lambda_{\max}(Y_{i^{\opts}}^{-\frac{1}{2}} X^\opts Y_{i^{\opts}}^{-\frac{1}{2}})), -\log(\lambda_{\min}(Y_{j^{\opts}}^{-\frac{1}{2}} X^\opts Y_{j^{\opts}}^{-\frac{1}{2}})) \geq 0.
	\end{equation*}
	Next we will show that if there exists only one $i^\opts$ or $j^\opts$, then $(X^\opts,t^\opts)$ would not be a solution. To this end, assume that no index such as $j^\opts$ exists, so that 
	\begin{equation*}
	\begin{aligned}
	|\log(\lambda_{\min}(Y_j^{-\frac{1}{2}} X^\opts Y_j^{-\frac{1}{2}}))| < \log(\lambda_{\max}(Y_{i^{\opts}}^{-\frac{1}{2}} X^\opts Y_{i^{\opts}}^{-\frac{1}{2}})) =t^\opts \text{ for all } \ j\\
	 \log(\lambda_{\max}(Y_{i}^{-\frac{1}{2}} X^\opts Y_{i}^{-\frac{1}{2}})) \leq \log(\lambda_{\max}(Y_{i^{\opts}}^{-\frac{1}{2}} X^\opts Y_{i^{\opts}}^{-\frac{1}{2}})) = t^\opts \text{ for all } \ i.
	\end{aligned}
	\end{equation*}
	Then for sufficiently large $0< k < 1$ and $\tilde{X}^\opts := kX^\opts$, it holds that 
		\begin{equation*}
	\begin{aligned}
	|\log(\lambda_{\min}(Y_j^{-\frac{1}{2}} \tilde{X}^\opts Y_j^{-\frac{1}{2}}))| < \log(\lambda_{\max}(Y_{i^{\opts}}^{-\frac{1}{2}} \tilde{X}^\opts Y_{i^{\opts}}^{-\frac{1}{2}})) < t^\opts \text{ for all } \ j\\
		 \log(\lambda_{\max}(Y_{i}^{-\frac{1}{2}} \tilde{X}^\opts Y_{i}^{-\frac{1}{2}})) \leq \log(\lambda_{\max}(Y_{i^{\opts}}^{-\frac{1}{2}} \tilde{X}^\opts Y_{i^{\opts}}^{-\frac{1}{2}})) < t^\opts \text{ for all } \ i,
	\end{aligned}
	\end{equation*}
	which would mean that $\tilde{X}^\opts$ is a feasible solution of smaller cost than $t^\opts$. Analogously, it follows that there always exists an index $i^\opts$ with the required property. 
\end{proof}

\begin{proposition}
	Recall the $N=2$ geometric midrange problem for $Y_1$ and $Y_2$ in $\mathbb{P}_n$. Set $\alpha :=  \lambda_{\max}(Y_1^{-\frac{1}{2}} Y_2 Y_1^{-\frac{1}{2}})$ and $\beta := \lambda_{\min}(Y_1^{-\frac{1}{2}} Y_2 Y_1^{-\frac{1}{2}})$. If $\alpha \neq \beta$, then
	\begin{equation}\label{ansatz}
	X^\opts = \frac{ \sqrt{\alpha \beta} Y_1 + Y_2}{\sqrt{\alpha} +\sqrt{\beta}} = Y_1*Y_2
	\end{equation}
	is the only midrange of $\{Y_1,Y_2\}$ in $\{k_1 Y_1+k_2 Y_2: k_1,k_2 \geq 0 \}$ for which the following is satisfied:
	\begin{equation} \label{simultaneous opt}
	\begin{cases}
	\log(\lambda_{\max}(Y_1^{-\frac{1}{2}} X^\opts Y_1^{-\frac{1}{2}})) = -\log(\lambda_{\min}(Y_2^{-\frac{1}{2}} X^\opts Y_2^{-\frac{1}{2}})) \\
	\log(\lambda_{\max}(Y_2^{-\frac{1}{2}} X^\opts Y_2^{-\frac{1}{2}})) = -\log(\lambda_{\min}(Y_1^{-\frac{1}{2}} X^\opts Y_1^{-\frac{1}{2}})). 
	\end{cases}
	\end{equation}
	The optimal cost to \cref{matrix midrange quasi} is given by $t^\opts = \frac{1}{2}\max \{ |\log(\alpha)|, |\log(\beta)| \}$. Furthermore, if $Y_2 V = Y_1 VD$ is a generalized eigenvalue decomposition such that $V^* Y_1  V = I$ and $D$ is diagonal, then $V^* X^\opts V$ is diagonal. 
\end{proposition}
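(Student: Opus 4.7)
The plan is to reduce everything to the simultaneous diagonalization of $Y_1$ and $Y_2$. Let $V$ be the generalized eigenvector matrix satisfying $Y_2 V = Y_1 V D$ with $V^* Y_1 V = I$ and $D = \diag(d_1,\ldots,d_n)$; the diagonal entries $d_i$ coincide with the eigenvalues of $Y_1^{-1/2}Y_2Y_1^{-1/2}$, so $\max_i d_i = \alpha$, $\min_i d_i = \beta$, and $V^* Y_2 V = D$. Consequently, for any $X = k_1 Y_1 + k_2 Y_2$ with $k_1, k_2 \geq 0$, one has $V^* X V = k_1 I + k_2 D$, which is diagonal; this already proves the final assertion of the proposition once $X^\opts$ has been identified as such a combination.

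Next, I would express the extremal eigenvalues of $Y_i^{-1/2}XY_i^{-1/2}$ for $i = 1,2$ in closed form. Since $Y_1^{-1/2}XY_1^{-1/2}$ is similar to $Y_1^{-1}X = k_1 I + k_2 Y_1^{-1}Y_2$ with spectrum $\{k_1 + k_2 d_i\}$, and symmetrically the spectrum of $Y_2^{-1/2}XY_2^{-1/2}$ is $\{k_2 + k_1/d_i\}$, the nonnegativity of $k_1, k_2$ locates the extremal eigenvalues at $d_i = \alpha$ and $d_i = \beta$. Exponentiating the two identities in \cref{simultaneous opt} and combining logarithms then transforms them into the algebraic system
\begin{equation*}
(k_1 + k_2\alpha)(k_2 + k_1/\alpha) = 1, \qquad (k_1 + k_2\beta)(k_2 + k_1/\beta) = 1.
\end{equation*}

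Expanding both equations and subtracting cancels the common $2k_1 k_2$ term and leaves $(\alpha-\beta)\bigl(k_2^2 - k_1^2/(\alpha\beta)\bigr) = 0$. Since $\alpha \neq \beta$ and $k_1, k_2 \geq 0$, this forces $k_2 = k_1/\sqrt{\alpha\beta}$; back-substitution into either equation uniquely determines $k_1 = \sqrt{\alpha\beta}/(\sqrt{\alpha}+\sqrt{\beta})$ and $k_2 = 1/(\sqrt{\alpha}+\sqrt{\beta})$, recovering \cref{ansatz} and establishing uniqueness within the cone $\{k_1 Y_1 + k_2 Y_2 : k_1, k_2 \geq 0\}$. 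Direct evaluation at this $X^\opts$ gives the four extremal eigenvalues $\sqrt{\alpha}, \sqrt{\beta}, 1/\sqrt{\alpha}, 1/\sqrt{\beta}$, whose absolute logarithms are $\frac{1}{2}|\log\alpha|$ or $\frac{1}{2}|\log\beta|$, yielding $t^\opts = \frac{1}{2}\max\{|\log\alpha|, |\log\beta|\}$.

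The main technical subtlety lies in the elimination step, which relies crucially on $\alpha \neq \beta$ to produce an independent relation after subtraction. If $\alpha = \beta$, the pencil is degenerate, $Y_2$ is a scalar multiple of $Y_1$, and the two balancing equations collapse into one, so the argument above does not apply (although in that case the midrange problem reduces trivially). A secondary subtlety is verifying that the positivity constraint selects the root $k_2 = +k_1/\sqrt{\alpha\beta}$ rather than its spurious negative counterpart.
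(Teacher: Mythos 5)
Your proof is correct and follows essentially the same route as the paper: substitute the conic ansatz $X=k_1Y_1+k_2Y_2$, use the shared generalized eigenbasis to read off the extremal eigenvalues as $k_1+k_2\alpha$, $k_1+k_2\beta$, $k_2+k_1/\beta$, $k_2+k_1/\alpha$, reduce \cref{simultaneous opt} to the same algebraic system, and eliminate to get $(\alpha-\beta)\bigl(k_2^2-k_1^2/(\alpha\beta)\bigr)=0$. Your direct justification that $V^*X^\opts V=k_1I+k_2D$ is diagonal is actually a bit cleaner than the paper's appeal to an earlier proposition, but the argument is substantively identical.
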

\begin{proof}
	Using the linear ansatz $X^\opts = k_1 Y_1+k_2 Y_2$, we obtain:
	\begin{align*}
	\lambda_{\max}(Y_1^{-\frac{1}{2}} X^\opts Y_1^{-\frac{1}{2}}) &= k_1 + k_2 \lambda_{\max}(Y_1^{-\frac{1}{2}} Y_2 Y_1^{-\frac{1}{2}}) = k_1 + k_2 \alpha \\
	\lambda_{\max}(Y_2^{-\frac{1}{2}} X^\opts Y_2^{-\frac{1}{2}}) &= k_1  \lambda_{\max}(Y_2^{-\frac{1}{2}} Y_1 Y_2^{-\frac{1}{2}})  + k_2 = \frac{k_1}{\beta} + k_2\\
	\lambda_{\min}(Y_1^{-\frac{1}{2}} X^\opts Y_1^{-\frac{1}{2}}) &= k_1 + k_2 \lambda_{\min}(Y_1^{-\frac{1}{2}} Y_2 Y_1^{-\frac{1}{2}}) = k_1 + k_2{\beta}\\
	\lambda_{\min}(Y_2^{-\frac{1}{2}} X^\opts Y_2^{-\frac{1}{2}}) &= k_1  \lambda_{\min}(Y_2^{-\frac{1}{2}} Y_1 Y_2^{-\frac{1}{2}})  + k_2 = \frac{k_1}{\alpha} + k_2.
	\end{align*}
	Substituting these expressions into \cref{simultaneous opt}, 
	we find that
	\begin{equation} \label{eq:mid_1} 
	k_1 + k_2 \alpha = \frac{1}{\frac{k_1}{\alpha} + k_2} \quad \textrm{and} \quad
	\frac{k_1}{\beta} + k_2 = \frac{1}{k_1 + k_2{\beta}}.
	\end{equation}
	Hence
	\begin{align*}
	& (k_1 + k_2 \alpha)\left(\frac{k_1}{\alpha} + k_2\right) = \left(\frac{k_1}{\beta} + k_2\right)(k_1 + k_2{\beta}),
	\end{align*}
	which is equivalent to $(k_2^2\alpha\beta-k_1^2)(\alpha-\beta) = 0$ and 
	implies that $k_1^2=k_2^2\alpha\beta$ since $\alpha\neq \beta$.
    Substituting $k_1 = \sqrt{\alpha \beta} k_2$ into the first equation of \cref{eq:mid_1} gives $k_2^2 \beta + 2 k_2^2  \sqrt{\alpha \beta} + k_2^2 \alpha = k_2^2(\sqrt{\alpha} + \sqrt{\beta})^2= 1$
	and thus \cref{ansatz}. That the cost is given by $t^\opts$ is trivial and optimality of $X^\opts$ follows by the attainment of the lower bound. Finally, $V^* X^\opts V$ is diagonal by \cref{prop:active}.
\end{proof}

\begin{remark}
Note that if $\alpha = \beta$ in the statement of the previous proposition, then $Y_1^{-1/2}Y_2Y_1^{-1/2}=\alpha I$, which is equivalent to $Y_2 = \alpha Y_1$. The midrange $Y_1\star Y_2 = \sqrt{\alpha}Y_1$ can then be obtained as a conic combination of $Y_1$ and $Y_2=\alpha Y_1$ in a non-unique way.
\end{remark}

In the remainder of this section, we explore the significance of the number of active points at optimum for the attainment of the lower bound of \cref{matrix midrange quasi} when $N\geq 2$.

\begin{lemma}
	\label{lem:active}
	Let $N \geq 2$ and $(X^\opts,t^\opts)$ be a solution to \cref{matrix midrange quasi}. Then the following are equivalent:
		\begin{enumerate}
		\item $Y_j$ is active with $\log\left(\lambda_{\max}\left(Y_j^{-\frac{1}{2}} {X}^\opts Y_j^{-\frac{1}{2}}\right)\right) = t^\opts$
		\item $\lambda_{\min}(e^{t^\opts}Y_j - X^\opts) = 0$
		\item $\nexists \varepsilon > 0: X^\opts \preceq e^{t^\opts} Y_j - \varepsilon I$
	\end{enumerate}
Analogously, we have the equivalences: 
	\begin{enumerate}
	\item $Y_j$ is active with $-\log\left(\lambda_{\min}\left(Y_j^{-\frac{1}{2}} {X}^\opts Y_j^{-\frac{1}{2}}\right)\right) = t^\opts$
	\item $\lambda_{\max}(X^\opts-e^{-t^\opts}Y_j) = 0$
	\item $\nexists\varepsilon > 0: X^\opts \succeq e^{t^\opts} Y_j + \varepsilon I$
\end{enumerate}
\end{lemma}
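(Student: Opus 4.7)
The plan is to prove each of the two circular chains of equivalences by reducing to elementary facts about positive semidefinite matrices, using the feasibility of $(X^\opts,t^\opts)$ in \cref{matrix midrange quasi} as the starting point. Feasibility gives $e^{-t^\opts} Y_j \preceq X^\opts \preceq e^{t^\opts} Y_j$ for every $j$, so both $M_j := e^{t^\opts} Y_j - X^\opts$ and $N_j := X^\opts - e^{-t^\opts} Y_j$ are positive semidefinite at the outset. Each chain then characterizes when one of these inequalities is tight, i.e., when $M_j$ or $N_j$ is singular.

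For the first chain I would establish (1)$\Leftrightarrow$(2) via the congruence $Z \mapsto Y_j^{1/2} Z Y_j^{1/2}$. The identity $\log\lambda_{\max}(Y_j^{-1/2} X^\opts Y_j^{-1/2}) = t^\opts$ says $\lambda_{\max}(Y_j^{-1/2} X^\opts Y_j^{-1/2}) = e^{t^\opts}$, which under the congruence is equivalent to the existence of a nonzero $v$ with $X^\opts v = e^{t^\opts} Y_j v$, hence to singularity of $M_j$. Combined with $M_j \succeq 0$ from feasibility, singularity is the same as $\lambda_{\min}(M_j) = 0$, which is (2). For (2)$\Leftrightarrow$(3), I would invoke the standard fact that a PSD matrix $M_j$ satisfies $M_j \succeq \varepsilon I$ for some $\varepsilon > 0$ if and only if $\lambda_{\min}(M_j) > 0$; the nonexistence of such $\varepsilon$ asserted in (3) is therefore equivalent to $\lambda_{\min}(M_j) \le 0$, which together with $M_j \succeq 0$ collapses to $\lambda_{\min}(M_j) = 0$.

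The second chain follows by exactly the same recipe with $N_j$ in place of $M_j$. The condition on the smallest eigenvalue of $Y_j^{-1/2} X^\opts Y_j^{-1/2}$ rewrites as $\lambda_{\min}(Y_j^{-1/2} X^\opts Y_j^{-1/2}) = e^{-t^\opts}$, which under the same congruence is equivalent to $e^{-t^\opts}$ being a generalized eigenvalue of $(X^\opts,Y_j)$, i.e., to singularity of $N_j$; combined with $N_j \succeq 0$ this is precisely the vanishing of the extremal eigenvalue that can vanish, and the strict-inequality characterization parallel to (3) gives the last link by the same PSD-dichotomy argument.

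I do not anticipate any genuine obstacle: the lemma is essentially a linear-algebraic translation between three equivalent languages (eigenvalue attainment, singularity at the feasibility boundary, and failure of strict feasibility) for the same geometric fact that $X^\opts$ touches the $Y_j$-boundary of the feasible set. The only bookkeeping care needed is to keep the congruence $X \mapsto Y_j^{-1/2} X Y_j^{-1/2}$ straight and to invoke $M_j, N_j \succeq 0$ from feasibility whenever moving between the statements (2) and (3).
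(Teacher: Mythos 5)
The paper states this lemma without proof, so there is no argument in the source to compare against. Your outline is correct and is the natural one: reduce item (1) to singularity of the slack matrix $M_j = e^{t^\opts}Y_j - X^\opts$ via the congruence by $Y_j^{1/2}$, use feasibility to guarantee $M_j \succeq 0$, and then pass between (2) and (3) by the elementary dichotomy that a PSD matrix admits $M_j \succeq \varepsilon I$ for some $\varepsilon > 0$ iff $\lambda_{\min}(M_j) > 0$. All the linear-algebraic steps check out.

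One thing worth flagging explicitly, which you correct only tacitly: items (2) and (3) of the second chain contain apparent typos as printed. Since feasibility gives $N_j := X^\opts - e^{-t^\opts}Y_j \succeq 0$, the condition $\lambda_{\max}(N_j) = 0$ would force $X^\opts = e^{-t^\opts}Y_j$, which is far stronger than singularity; the intended condition is $\lambda_{\min}(X^\opts - e^{-t^\opts}Y_j) = 0$ (equivalently $\lambda_{\max}(e^{-t^\opts}Y_j - X^\opts) = 0$). Likewise item (3) should read $\nexists\,\varepsilon>0: X^\opts \succeq e^{-t^\opts}Y_j + \varepsilon I$ rather than $e^{t^\opts}Y_j + \varepsilon I$, since the latter is vacuously true for any feasible $X^\opts$. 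Your argument in fact proves the corrected statements, which is the right thing to do, but in a written proof you should note the correction rather than silently substituting it, since a reader checking your proof against the lemma as printed would otherwise see a mismatch in (2) and a non sequitur in (3).
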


\begin{lemma}
	\label{lem:blkdiag}
	Let $D = \diag(d_1,\dots,d_n)$ with $d_1 \leq \dots \leq d_n$ and $D \preceq X \preceq d_n I$. Then,
	\begin{equation*}
	X = \begin{pmatrix}
	X_{11} & 0\\
	0 &	d_n
	\end{pmatrix}
	\end{equation*}
	and thus $d_n = \lambda_{\max}(X) \geq \lambda_{\max}(X_{11})$.
\end{lemma}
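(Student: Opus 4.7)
The plan is to extract information from the two inequalities $D \preceq X$ and $X \preceq d_n I$ applied first at a single diagonal entry, and then to propagate this to the corresponding row and column of $X$ using the standard fact that a positive semidefinite matrix with a zero on the diagonal must have the associated row and column identically zero.

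First I would read off the $(n,n)$ entries. The inequality $X - D \succeq 0$ forces $X_{nn} \geq d_n$, while $d_n I - X \succeq 0$ forces $X_{nn} \leq d_n$. Hence $X_{nn} = d_n$. Now consider the positive semidefinite matrix $M := d_n I - X$. Its $(n,n)$ entry is $d_n - X_{nn} = 0$. For any PSD matrix one has $|M_{in}|^2 \leq M_{ii} M_{nn}$ (Cauchy--Schwarz applied to $M^{1/2}e_i$ and $M^{1/2} e_n$), so $M_{nn}=0$ implies $M_{in}=0$ for every $i$. Therefore $X_{in} = (d_n I)_{in} = 0$ for $i \neq n$ and $X_{nn} = d_n$, which is exactly the claimed block structure
\begin{equation*}
X = \begin{pmatrix} X_{11} & 0 \\ 0 & d_n \end{pmatrix}.
\end{equation*}

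For the second assertion, the spectrum of $X$ is the union of $\{d_n\}$ and the spectrum of $X_{11}$ thanks to the block diagonal form. Since $X \preceq d_n I$ gives $\lambda_{\max}(X) \leq d_n$ and $e_n^\top X e_n = d_n$ gives $\lambda_{\max}(X) \geq d_n$, we conclude $\lambda_{\max}(X) = d_n$, and consequently $\lambda_{\max}(X_{11}) \leq d_n = \lambda_{\max}(X)$.

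The only mildly nontrivial ingredient is the PSD vanishing-row lemma; everything else is just bookkeeping with the Loewner order, so I do not expect any real obstacle.
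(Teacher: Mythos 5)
Your proof is correct. The paper's own argument is structurally similar---it also first establishes $X_{nn} = d_n$ together with $\lambda_{\max}(X) \le d_n$---but the two proofs then diverge in the mechanism used to deduce the block structure. The paper invokes the Courant--Fischer (Rayleigh-quotient) characterization: since $e_n^* X e_n = X_{nn} = d_n = \lambda_{\max}(X)$, the vector $e_n$ attains the maximum of the Rayleigh quotient, hence is an eigenvector of $X$ with eigenvalue $d_n$, which forces the last row and column of $X$ to equal $d_n e_n$. You instead apply the PSD vanishing-row lemma directly to $M := d_n I - X \succeq 0$: since $M_{nn} = 0$, Cauchy--Schwarz on the quadratic form yields $M_{in} = 0$ for all $i$, giving the same conclusion. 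Both routes are short and valid; yours is marginally more elementary in that it bypasses the variational characterization of eigenvalues and relies only on the Cauchy--Schwarz inequality implicit in positive semidefiniteness. Your handling of the second assertion, via the block-diagonal splitting of the spectrum, matches the paper's conclusion as well.
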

\begin{proof}
	From the inequality it follows that $X_{nn} = d_n$ and $\lambda_{\max}(X) \leq d_n$.  Thus, by Courant-Fischer, $e_n=(0,0,\dots,1)$ is an eigenvector of $X$ with eigenvalue $d_n$ and thus $X$ has the required form. 
\end{proof}

\begin{proposition}
	Let $(X^\opts,t^\opts)$ be a solution to \cref{matrix midrange quasi} and 
	\begin{equation*}
	t^\opts =  \frac{1}{2}\log\left(\lambda_{\max}\left(Y_1^{-\frac{1}{2}} Y_2 Y_1^{-\frac{1}{2}}\right)\right).
	\end{equation*}
	Then, $Y_1$ and $Y_2$ are active with
	\begin{equation*}
	\log\left(\lambda_{\max}\left(Y_1^{-\frac{1}{2}} {X}^\opts Y_1^{-\frac{1}{2}}\right)\right) = t^\opts = -\log\left(\lambda_{\min}\left(Y_2^{-\frac{1}{2}} {X}^\opts Y_2^{-\frac{1}{2}}\right)\right).
	\end{equation*}
	Further, if $Y_2 V = Y_1 VD$ is a generalized eigenvalue decomposition such that $V^* Y_1  V = I$ and $D = \diag(d_1,\dots,d_n)$ with $d_1 \leq \dots \leq d_n = \lambda_{\max}\left(Y_1^{-\frac{1}{2}} Y_2 Y_1^{-\frac{1}{2}}\right)$, then 
	\begin{equation*}
	V^*X^\opts V =  \begin{pmatrix}
	X_{11} & 0\\
	0 &	\sqrt{d_n}
	\end{pmatrix}.
	\end{equation*}
\end{proposition}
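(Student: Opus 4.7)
The plan is to combine the feasibility constraints of \cref{matrix midrange quasi} for the indices $1$ and $2$, transport them into the simultaneous diagonalization basis $V$, and apply \cref{lem:blkdiag} to pin down the structure of $V^* X^\opts V$; the two activity conditions then follow by inspection.

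First, the constraints $e^{-t^\opts} Y_j \preceq X^\opts \preceq e^{t^\opts} Y_j$ for $j=1,2$ can be chained to give the sandwich
\begin{equation*}
e^{-t^\opts} Y_2 \preceq X^\opts \preceq e^{t^\opts} Y_1,
\end{equation*}
which in particular forces $e^{2t^\opts} Y_1 \succeq Y_2$, i.e.\ $e^{2t^\opts} \geq \lambda_{\max}(Y_1^{-1/2} Y_2 Y_1^{-1/2}) = d_n$. Our hypothesis $t^\opts = \tfrac{1}{2}\log d_n$ saturates this inequality. Setting $\tilde{X} := V^* X^\opts V$ and using $V^* Y_1 V = I$, $V^* Y_2 V = D$, the sandwich becomes $D \preceq \sqrt{d_n}\,\tilde{X} \preceq d_n I$. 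Applying \cref{lem:blkdiag} to $\sqrt{d_n}\,\tilde{X}$ then yields precisely
\begin{equation*}
\tilde{X} = \begin{pmatrix} \tilde{X}_{11} & 0 \\ 0 & \sqrt{d_n} \end{pmatrix},
\end{equation*}
as claimed.

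It remains to read off the two activity statements from this block form. Using the congruence $X^\opts = V^{-*} \tilde{X} V^{-1}$ together with $Y_1 = V^{-*} V^{-1}$, one has $\lambda_{\max}(Y_1^{-1/2} X^\opts Y_1^{-1/2}) = \lambda_{\max}(\tilde{X})$; the upper sandwich $\tilde X \preceq \sqrt{d_n}\,I$ combined with $\sqrt{d_n}$ being an eigenvalue of $\tilde{X}$ via $e_n$ gives $\lambda_{\max}(\tilde X) = \sqrt{d_n}$, hence $\log\lambda_{\max}(Y_1^{-1/2} X^\opts Y_1^{-1/2}) = t^\opts$. Analogously, $Y_2 = V^{-*} D V^{-1}$ yields $\lambda_{\min}(Y_2^{-1/2} X^\opts Y_2^{-1/2}) = \lambda_{\min}(D^{-1/2}\tilde{X} D^{-1/2})$, and the lower sandwich $D/\sqrt{d_n} \preceq \tilde X$ combined with the $(n,n)$-entry of $D^{-1/2}\tilde{X}D^{-1/2}$ being exactly $1/\sqrt{d_n}$ forces this minimum to equal $1/\sqrt{d_n}$, which is the required $-\log\lambda_{\min}(Y_2^{-1/2} X^\opts Y_2^{-1/2}) = t^\opts$. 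The only subtle step is the rescaling needed to put the sandwich in the form required by \cref{lem:blkdiag}; once the block structure is in hand, the activity conditions are an essentially algebraic unpacking in the diagonalizing basis.
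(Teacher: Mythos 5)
Your proposal is correct and follows essentially the same route as the paper: pass to the simultaneously diagonalizing basis $V$, chain the two feasibility constraints into the sandwich $e^{-t^\opts}D \preceq V^*X^\opts V \preceq e^{t^\opts}I$, and invoke \cref{lem:blkdiag} (after the scaling by $\sqrt{d_n}$, which you rightly make explicit) to extract the block structure and the two activity equalities.
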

\begin{proof}
	Let $X_V := V^* X^\opts V$. We first show that $X_V$ has the claimed structure. To this end, note that by \cref{matrix midrange quasi} 
	\begin{equation*}
	e^{-t^\opts} I \preceq X_V \preceq e^{t^\opts} I \quad \textrm{and} \quad
	e^{-t^\opts} D \preceq X_V \preceq e^{t^\opts} D,
	\end{equation*}
which implies that $e^{-t^\opts} D \preceq X_V \preceq e^{t^\opts} I$
	with $e^{t^\opts} =  \sqrt{d_n}$. Therefore, \cref{lem:blkdiag} implies the required structure for $X_V$. Then by \cref{lem:active} it follows that $e^{t^\opts} I$ and $e^{-t^\opts} D$ are active matrices for $(X_V,t^\opts)$ and thus $Y_1$ and $Y_2$ are active matrices for $(X^\opts,t^\opts)$. Then by \cref{lem:blkdiag} we can conclude the remaining claim.
\end{proof}

\begin{proposition} \label{attainment}
If there are only two active matrices at an optimum $(X^\opts,t^\opts)$ of \cref{matrix midrange quasi}, then the lower bound $l=\frac{1}{2}\operatorname{diam}_{\infty}(\{Y_i\})$ is attained.
\end{proposition}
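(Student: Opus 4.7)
The plan is to combine the triangle inequality for $d_{\infty}$ with the geodesic convexity of the Thompson distance along Riemannian geodesics, reducing the hypothesised configuration to an effective $2$-point problem at the optimum.

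First I would invoke \cref{prop:active} to extract distinct indices $i^\opts, j^\opts$ such that $Y_{i^\opts}$ witnesses the $\lambda_{\max}$ equality and $Y_{j^\opts}$ witnesses the $\lambda_{\min}$ equality at value $t^\opts$. Both are active, so by the hypothesis of exactly two active matrices the active set is precisely $\{Y_{i^\opts}, Y_{j^\opts}\}$. The triangle inequality applied to these two active constraints gives
\begin{equation*}
d_{\infty}(Y_{i^\opts}, Y_{j^\opts}) \leq d_{\infty}(Y_{i^\opts}, X^\opts) + d_{\infty}(X^\opts, Y_{j^\opts}) = 2 t^\opts,
\end{equation*}
so $t^\opts \geq \tfrac{1}{2} d_{\infty}(Y_{i^\opts}, Y_{j^\opts})$. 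Combined with the universal bounds $t^\opts \geq l \geq \tfrac{1}{2} d_{\infty}(Y_{i^\opts}, Y_{j^\opts})$, the desired conclusion $t^\opts = l$ will follow once I establish the matching upper bound $t^\opts \leq \tfrac{1}{2} d_{\infty}(Y_{i^\opts}, Y_{j^\opts})$.

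The main step is a perturbation argument toward a Thompson midpoint of the two active matrices. Suppose, for contradiction, that $t^\opts > \tfrac{1}{2} d_{\infty}(Y_{i^\opts}, Y_{j^\opts})$. Choose a Thompson midpoint $M$ of the pair $\{Y_{i^\opts}, Y_{j^\opts}\}$ (for concreteness, $M = Y_{i^\opts} * Y_{j^\opts}$) and consider the Riemannian geodesic $\gamma(s) := \gamma_{\mathcal{G}}^{X^\opts, M}(s)$ for $s \in [0,1]$. Applying the geodesic convexity theorem \cref{geodesic convexity 1} with the constant geodesic $t \mapsto Y_k$ yields that $s \mapsto d_{\infty}(\gamma(s), Y_k)$ is convex for every $k$. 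For $k \in \{i^\opts, j^\opts\}$, the endpoint values $d_{\infty}(\gamma(0), Y_k) = t^\opts$ and $d_{\infty}(\gamma(1), Y_k) = \tfrac{1}{2} d_{\infty}(Y_{i^\opts}, Y_{j^\opts}) < t^\opts$ combine with convexity to force $d_{\infty}(\gamma(s), Y_k) < t^\opts$ for every $s \in (0, 1]$. For each non-active $Y_k$, the strict inequality $d_{\infty}(X^\opts, Y_k) < t^\opts$ together with continuity of $s \mapsto d_{\infty}(\gamma(s), Y_k)$ provides an $\epsilon_k > 0$ on which that distance remains below $t^\opts$; taking $\epsilon = \min_k \epsilon_k > 0$ over the finite set of non-active indices, any $s \in (0, \epsilon)$ produces a feasible point of strictly smaller cost than $t^\opts$, contradicting optimality.

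The principal obstacle is ensuring that the perturbation decreases the distances to \emph{both} active matrices simultaneously while not immediately activating a currently inactive constraint. Geodesic convexity of $d_{\infty}$ along the Riemannian geodesic from $X^\opts$ to $M$ handles the first issue cleanly via a global convexity bound rather than a delicate local first-order analysis, and continuity combined with the finiteness of the data handles the second.
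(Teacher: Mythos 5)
Your proof is correct and uses the same key idea as the paper's: move from $X^\opts$ along the Riemannian geodesic toward the Thompson midpoint $Y_{i^\opts}*Y_{j^\opts}$ of the two active matrices and invoke geodesic convexity of $d_{\infty}$ to show the cost strictly decreases for small $s>0$, contradicting optimality. The only cosmetic differences are that you explicitly invoke \cref{prop:active} to name the two active matrices and you frame the contradiction against $\tfrac{1}{2}d_{\infty}(Y_{i^\opts},Y_{j^\opts})$ rather than directly against $l$; both are equivalent given the sandwich $\tfrac{1}{2}d_{\infty}(Y_{i^\opts},Y_{j^\opts})\leq l\leq t^\opts$.
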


\begin{proof}
Suppose that $Y_1$ and $Y_2$ are the only two active matrices at $(X^{\opts},t^\opts)$ and assume that the lower bound $l$ is not attained so that $l < t^\opts$. Denote the geodesic \cref{R geodesic} from $X^\opts$ to the $d_{\infty}$-midpoint $Y_1*Y_2$ of $Y_1$ and $Y_2$ by $\gamma_{\mathcal{G}}(s)=\gamma_{\mathcal{G}}(s,X^\opts,Y_1*Y_2)$.
By the geodesic convexity of $(\mathbb{P}_n,d_{\infty})$, the function $s\mapsto d_{\infty}(Y_j,\gamma_{\mathcal{G}}(s,X^\opts,Y_1*Y_2))$ is convex for $j=1,2$. Thus, we have
\begin{align*}
d_{\infty}(Y_j,\gamma_{\mathcal{G}}(s,X^\opts,Y_1*Y_2)) &\leq (1-s)d_{\infty}(Y_j,X^\opts)+sd_{\infty}(Y_j,Y_1*Y_2) \\
& = (1-s)t^\opts + \frac{s}{2}d_{\infty}(Y_1,Y_2) \\
&\leq (1-s)t^\opts + sl  \\
&<(1-s)t^\opts + s t^\opts = t^\opts,
\end{align*}
for any $s>0$ and $j=1,2$. As all matrices other than $Y_1$ and $Y_2$ are inactive at $s=0$, we can achieve a local reduction in the cost function by moving a sufficiently small $s>0$ along the geodesic from $X^\opts$ to $Y_1*Y_2$, which would contradict the optimality of $(X^\opts,t^\opts)$. Thus, we  have $t^\opts=l$.
\end{proof}

\begin{remark} 
Note that although \cref{attainment} implies that the optimum $X^\opts$ will lie in the $d_{\infty}$-midpoint set of the active pair of matrices, it does not imply that any $d_{\infty}$-midpoint of $Y_1,Y_2$ will be a solution. In particular, $Y_1*Y_2$ may not be a solution even if the only active matrices at optimum are $Y_1$ and $Y_2$ since $Y_1*Y_2$ may fail to satisfy one or more of the constraints in \cref{matrix midrange quasi}. This is in contrast to the scenario in \cref{ordered midrange}, where any midrange of $Y_1$ and $Y_N$ will be a solution.
\end{remark}

\section{Conclusion}
\label{sec:conclusion}

We have introduced a theory of geometric midrange statistics for positive definite Hermitian matrices within an optimization framework. We have also established a number of key results including bounds on the optimization problem as well as necessary conditions for optimality. Furthermore, a solution to the $N$-point problem is offered via convex optimization. Special consideration has been given to the $2$-point midrange problem, which was studied in detail from a number of complementary perspectives. The existence of solutions to the $2$-point problem that can be computed using only extremal generalized eigenvalues has significant implications for computational scalability of matrix midrange statistics. We expect this work to offer a solid foundation for future research in statistics based on Thompson geometry and related topics such as $K$-midranges \cite{k-midranges,clustering2006} for matrix-valued data. The development of a fast algorithm for the computation of a midrange of $N$ matrices would be an important step in this direction, with weighted inductive schemes and stochastic algorithms offering a promising angle of attack.

\section*{Acknowledgments}
We are most grateful to Yurii Nesterov for suggesting the change of variables that facilitated the conversion of the quasiconvex formulation of the $N$-point geometric matrix midrange problem to a convex optimization problem.

\bibliographystyle{siamplain}
\bibliography{references}
\end{document}